\title{On the dimension of observable sets for the heat equation}
\author{A. Walton Green\footnote{Washington University, St. Louis, MO. {\tt awgreen@wustl.edu}; A.~W.~G. is supported in part by NSF grant DMS-2202813}, Kévin Le Balc'h\footnote{Inria, Sorbonne Université, Université de Paris, CNRS, Laboratoire Jacques-Louis Lions, Paris, France. {\tt kevin.le-balc-h@inria.fr}},
Jérémy Martin\footnote{Inria, Sorbonne Université, Université de Paris, CNRS, Laboratoire Jacques-Louis Lions, Paris, France. {\tt jeremy.a.martin@inria.fr.}
}, Marcu-Antone Orsoni\footnote{Université Laval, Québec, QC, Canada. {\tt marcu-antone.orsoni@mat.ulaval.ca} The paper was mainly written while M.-A. O. was a postdoctoral fellow at the University of Toronto Mississauga and the Centre for Nonlinear Analysis and Modeling.}}
\date{\today}
\Crefname{paragraph}{Section}{Sections}
\numberwithin{equation}{section}
\newtheorem{theorem}{Theorem}[section]
\newtheorem{definition}[theorem]{Definition}
\newtheorem{lemma}[theorem]{Lemma}
\newtheorem{proposition}[theorem]{Proposition}
\newtheorem{corollary}[theorem]{Corollary}
\newtheorem{remark}[theorem]{Remark}
\newtheorem{question}[theorem]{Question}
\numberwithin{equation}{section}
\newcommand{\norme}[1]{\left\lVert#1\right\rVert}
\newcommand{\norm}[1]{\left\lVert#1\right\rVert}
\newcommand{\abs}[1]{\left\vert#1\right\vert}
\DeclareMathOperator{\cp}{cap}
\newcommand{\ensemblenombre}[1]{\mathbb{#1}}
\newcommand{\N}{\ensemblenombre{N}}
\newcommand{\R}{} 
\renewcommand{\R}{\ensemblenombre{R}}
\newcommand{\nn}{\mathbb{N}}
\newcommand\bna{\begin{eqnarray*}}
\newcommand\ena{\end{eqnarray*}}
\newcommand\bnan{\begin{eqnarray}}
\newcommand\enan{\end{eqnarray}}
\begin{document}

\maketitle

\begin{abstract}
We consider the heat equation on a bounded $C^1$ domain in $\mathbb{R}^n$ with Dirichlet boundary conditions. The primary aim of this paper is to prove that the heat equation is observable from any measurable set with a Hausdorff dimension strictly greater than $n - 1$. The proof relies on a novel spectral estimate for linear combinations of Laplace eigenfunctions, achieved through the propagation of smallness for solutions to Cauchy-Riemann systems as established by Malinnikova, and uses the Lebeau-Robbiano method. While this observability result is sharp regarding the Hausdorff dimension scale, our secondary goal is to construct families of sets with dimensions less than $n - 1$ from which the heat equation is still observable.
\end{abstract}

 \small
\tableofcontents
\normalsize

\section{Introduction}

\subsection{Observability and null-controllability of the heat equation}

Given \( T > 0 \), \(\Omega\) a bounded, open, connected subset of \(\mathbb{R}^n\), and \(\omega \subset \Omega\) a measurable set, this paper aims to establish observability estimates for the heat equation:
\begin{equation} \label{eq:Heat_equation}
    \begin{cases}
        \partial_t u - \Delta u = 0 & \text{in } (0,T) \times \Omega, \\ 
        u = 0 & \text{on } (0,T) \times \partial \Omega, \\
        u(0,\cdot) = u_0 & \text{in } \Omega.
    \end{cases}
\end{equation}

First, let us review some basic well-posedness properties of the heat equation \eqref{eq:Heat_equation}. According to \cite[Sections 7.1]{Eva10}, for every \( u_0 \in L^2(\Omega) \), there exists a unique weak solution \( u \in L^2(0,T; H_0^1(\Omega)) \cap H^1(0,T; H^{-1}(\Omega)) \) to the initial/boundary-value problem \eqref{eq:Heat_equation}. Using Sobolev embedding for mixed spaces, we deduce that \( u \in C([0,T]; L^2(\Omega)) \). Furthermore, due to the \( L^2-L^{\infty} \) regularizing effect of the heat equation (see, for instance, \cite[Proposition 3.5.7]{CH98}), we have \( u(T, \cdot) \in L^{\infty}(\Omega) \).

With this in hand, we introduce the following notion of observability for the heat equation \eqref{eq:Heat_equation} from \(\omega\) at time \( T \).

\begin{definition}
\label{def:ObsL2L2Linfty}
Let \( T > 0 \) and \(\omega \subset \Omega\) be a measurable set. The heat equation \eqref{eq:Heat_equation} is said to be observable from \(\omega\) at time \( T \) if there exists a positive constant \( C = C(\Omega, \omega, T) > 0 \) such that for every $u_0 \in L^2(\Omega)$, the solution $u$ of \eqref{eq:Heat_equation} satisfies
\begin{equation}
\label{eq:Observability_definition}
\|u (T,\cdot)\|_{L^{\infty}(\Omega)} \leq C \int_0^T \sup_{x \in \omega} |u(t,x)| \, dt.
\end{equation}
\end{definition}

\begin{remark}
The left-hand side of \eqref{eq:Observability_definition} is well-defined and finite due to the regularizing properties ensuring \( u(T, \cdot) \in L^{\infty}(\Omega) \). On the other hand, the right-hand side of \eqref{eq:Observability_definition} could potentially be infinite without additional assumptions on \(\Omega\), \(\omega\), or \( u_0 \). Additionally, at each time \( t \in (0,T) \), the term \( \sup_{x \in \omega} |u(t,x)| \) may not coincide with the standard \( L^{\infty} \)-norm, i.e., \( \|u(t,\cdot)\|_{L^{\infty}(\omega)} \).

Note that this observability notion \eqref{eq:Observability_definition} differs from the more standard one, typically referred to as the \(L^2-L^2\) observability estimate,
\begin{equation}
\label{eq:Observability_definition_standard}
\|u (T,\cdot)\|_{L^2(\Omega)} \leq C \left( \int_0^T \int_{\omega} |u(t,x)|^2 \, dt \, dx \right)^{1/2}.
\end{equation}
Firstly, all terms in \eqref{eq:Observability_definition_standard} are well-defined (and finite) due to the properties of the heat equation recalled above. Secondly, because of the Hilbert structure of \( L^2(\Omega) \) and \( L^2((0,T) \times \omega) \), \eqref{eq:Observability_definition_standard} is well-suited for employing duality methods and proving null-controllability results for the controlled heat equation; see, for instance, \cite[Theorem 2.48]{Cor07}. Finally, \eqref{eq:Observability_definition_standard} is also well-suited for cases with open sets \(\omega\) or measurable sets \(\omega\) where \( |\omega| > 0 \), where \( |\cdot| \) denotes the \( n \)-dimensional Lebesgue measure in \( \mathbb{R}^n \). Here, however, since we focus on measurable subsets \(\omega \subset \Omega\) where possibly \( |\omega| = 0 \), we will work with the more precise observability notion \eqref{eq:Observability_definition} in \Cref{def:ObsL2L2Linfty}.
\end{remark}

\medskip

From now and in all the following, we assume that $\Omega$ satisfies the following hypothesis.
\begin{equation}
   \text{The set}\  \Omega\ \text{is a bounded Lipschitz open connected set of $\R^n$, locally star-shaped.} \label{eq:assOmega}
    \tag{H}
\end{equation}
See \cite[Definition 4]{AEWZ14} for a precise definition of locally star-shaped domains. Note that from \cite[Remark 6]{AEWZ14} such domains include $C^1$ bounded domains, Lipschitz polyhedrons in $\R^n$ with $n \geq 3$ and bounded convex domains.

\subsection{Main results}

Our goals in this paper are twofold. Our main theorem provides observability results for \eqref{eq:Heat_equation} when the Hausdorff dimension of the observable set $\omega$ is strictly greater than $n-1$ (i.e. has codimension strictly less than $1$). 

Let us recall that for $s \geq 0$, the $s$-Hausdorff content of a set $E\subset \mathbb{R}^n$ is 
\begin{equation}\label{e:Hcontent} \mathcal{C}_{\mathcal{H}}^s (E) = \inf \left\{ \sum_j r_j ^s\ ;\ E \subset \bigcup_j B(x_j, r_j)\right\},\end{equation}
and the Hausdorff dimension of $E$ is defined as 
$$ \text{dim}_\mathcal{H} (E) = \inf \{s \geq 0\ ;\    \mathcal{C}_{\mathcal{H}}^s(E) =0 \}, \quad \text{codim}_{\mathcal H}(E) = n-\text{dim}_{\mathcal H}(E).$$
\begin{remark}
In this paper, we use the notion of $s$-Hausdorff content recalled in \eqref{e:Hcontent} instead of the more classical one of $s$-Hausdorff measure, defined for a set $E \subset \R^n$, 
\begin{equation}
    \label{eq:definehausdorffmeasure}
    \mathcal M_{\mathcal H}^s(E) = \lim_{\delta \to 0} \inf \left\{ \sum_j \mathrm{diam}(A_j) ^s\ ;\ E \subset \bigcup_j A_j,\ \mathrm{diam}(A_j) \leq \delta \right\}.
\end{equation}
Of course, the $s$-Hausdorff content and $s$-Hausdorff measure can differ but from \cite[Lemma 4.6]{Mat99} they both have the same negligible sets so $\mathcal{C}_{\mathcal{H}}^s (E) > 0 \Leftrightarrow \mathcal M_{\mathcal H}^s(E) > 0$ for $E \subset \R^n$ whence they both define the same Hausdorff dimension.
\end{remark}

Our main result is as follows.

\begin{theorem}
\label{thm:observability_main_result}

Let $T>0$ and $\omega \subset \Omega$ be a measurable subset satisfying $\mathrm{dim}_{\mathcal H}(\omega) > n-1$. Then the heat equation \eqref{eq:Heat_equation} is observable from $\omega$ at time $T$.

More precisely, if $\delta >0$ and $\omega \subset \Omega$ is a measurable subset satisfying $\mathcal C^{n-1+\delta}_{\mathcal H}(\omega)>0$, then there exists $C=C(\Omega, \delta, \omega)>0$ such that for every $T>0$, for every $u_0 \in L^2(\Omega)$, the solution $u$ of \eqref{eq:Heat_equation} satisfies
\begin{equation}
\label{eq:Observability_quantitative}
\|u (T,\cdot)\|_{L^{\infty}(\Omega)} \leq C e^{C/T} \int_0^T \sup_{x \in \omega} |u(t,x)|dt.
\end{equation}

\end{theorem}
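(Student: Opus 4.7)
The overall strategy is the classical Lebeau--Robbiano method, adapted both to sets of low Hausdorff dimension and to the $L^\infty$-norm appearing on the left-hand side of \eqref{eq:Observability_quantitative}. The plan reduces the theorem to one central new ingredient, a spectral inequality: if $\{e_j\}_{j\geq 1}$ are the Dirichlet--Laplace eigenfunctions on $\Omega$ with eigenvalues $\lambda_j$, then for any $\Lambda>0$ and any finite sum $f=\sum_{\lambda_j\leq\Lambda}a_je_j$,
\begin{equation*}
\|f\|_{L^\infty(\Omega)} \;\leq\; C\,e^{C\sqrt{\Lambda}}\,\sup_{x\in\omega}|f(x)|,
\end{equation*}
with $C=C(\Omega,\delta,\omega)$ depending only on a lower bound for $\mathcal{C}^{n-1+\delta}_{\mathcal{H}}(\omega)$. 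Once this is in hand, the rest is standard machinery.

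To prove the spectral inequality, I would first reduce to a local problem: by compactness and the hypothesis \eqref{eq:assOmega}, $\omega$ contains a subset of positive $(n-1+\delta)$-Hausdorff content lying in some ball $B$ where $\Omega$ is (essentially) star-shaped, so local estimates propagate to global ones in the usual way (three-balls, chaining, plus the Cauchy-like estimates up to the Lipschitz boundary established in the AEWZ-type literature). Locally, the key observation is that $f$ together with auxiliary components (for instance $\nabla f$ and a hyperbolic extension in an extra variable) can be packaged as a solution $F\colon U\subset\mathbb{R}^n\to\mathbb{R}^N$ of a (quasi-regular) Cauchy--Riemann system with coefficients controlled in terms of $\sqrt{\Lambda}$. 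Malinnikova's propagation of smallness for such systems then asserts that, for a set $E$ with $\mathcal{C}^{n-1+\delta}_{\mathcal{H}}(E)>0$,
\begin{equation*}
\|F\|_{L^\infty(K)}\;\leq\; C\,\bigl(\sup_E|F|\bigr)^{\alpha}\bigl(\sup_{U}|F|\bigr)^{1-\alpha},
\end{equation*}
for some $\alpha=\alpha(E,K,U)>0$. An a priori bound $\sup_U|F|\lesssim e^{C\sqrt{\Lambda}}\|f\|_{L^2(\Omega)}$ (from Bernstein / analytic extension for sums of eigenfunctions with $\lambda_j\leq\Lambda$) and the doubling-type global bound $\|f\|_{L^2(\Omega)}\lesssim\|f\|_{L^\infty(B)}$ combine to yield the spectral inequality after absorbing the small factor. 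Tracking the dependence of $\alpha$ and the constants on $\Lambda$ is the delicate point.

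With the spectral inequality available, I would deduce observability in the usual Lebeau--Robbiano fashion. Decompose $u=\pi_\Lambda u+(I-\pi_\Lambda)u$, where $\pi_\Lambda$ is the projection onto eigenvalues $\leq\Lambda$. Splitting $[0,T]$ into dyadic intervals $[a_k,b_k]\cup[b_k,a_{k+1}]$ with $\Lambda_k\to\infty$, apply the spectral inequality at an intermediate time in $[a_k,b_k]$ to $\pi_{\Lambda_k}u(t,\cdot)$, and control the high-frequency tail $(I-\pi_{\Lambda_k})u$ using the dissipation estimate $\|(I-\pi_{\Lambda_k})u(a_{k+1})\|_{L^2}\leq e^{-\Lambda_k(a_{k+1}-b_k)}\|u(b_k)\|_{L^2}$. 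Optimizing $\Lambda_k$ versus the interval lengths produces a telescoping inequality that yields, after summation, a bound of the form $\|u(T/2,\cdot)\|_{L^2(\Omega)}\leq Ce^{C/T}\int_0^T\sup_\omega|u(t,\cdot)|\,dt$. The upgrade from $L^2$ at time $T/2$ to $L^\infty$ at time $T$ is then free: the $L^2\to L^\infty$ smoothing of the Dirichlet heat semigroup on the remaining interval $[T/2,T]$ costs only a $T$-dependent constant that can be absorbed into $Ce^{C/T}$.

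The principal obstacle is clearly the spectral inequality. The classical strategy of embedding $f$ as a harmonic function $F(x,y)=\sum a_j\sinh(\sqrt{\lambda_j}y)e_j(x)/\sqrt{\lambda_j}$ in $\Omega\times\mathbb{R}$ and applying Logunov--Malinnikova would require $\omega\times\{0\}$ to have codimension strictly less than one \emph{in $\mathbb{R}^{n+1}$}, which fails by one full dimension. The reason Malinnikova's Cauchy--Riemann version is needed is precisely to work in $\mathbb{R}^n$ itself, where the codimension hypothesis $\mathrm{codim}_{\mathcal{H}}(\omega)<1$ is exactly at the critical threshold. Quantifying the resulting propagation constants uniformly in $\Lambda$, and handling the behaviour up to the Lipschitz boundary via locally star-shaped extensions, are the steps I expect to require the most care.
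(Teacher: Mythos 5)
Your overall architecture (spectral inequality $\Rightarrow$ Lebeau--Robbiano telescoping $\Rightarrow$ $L^2$-to-$L^\infty$ smoothing on $[T/2,T]$, plus the localization of $\omega$ to a ball where the AEWZ-type doubling estimate applies) matches the paper. But there is a genuine gap at the heart of the argument: your plan for the spectral inequality rejects the correct construction and replaces it with one that does not exist. You correctly observe that after the Jerison--Lebeau lifting $\hat u(x,t)=\sum a_k \sinh(\sqrt{\lambda_k}\,t)\varphi_k(x)/\sqrt{\lambda_k}$ the set $\omega\times\{0\}$ has codimension close to $2$ in $\mathbb{R}^{n+1}$, so Logunov--Malinnikova (which needs codimension essentially $\le 1$) is useless there. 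From this you conclude that one must avoid the lifting and package $f$, $\nabla f$, etc.\ as a Cauchy--Riemann system on a domain $U\subset\mathbb{R}^n$ ``with coefficients controlled in terms of $\sqrt{\Lambda}$.'' No such $\Lambda$-uniform system exists: a linear combination of eigenfunctions with $\lambda_j\le\Lambda$ satisfies no fixed first-order elliptic system in $\Omega$ itself, and any system whose coefficients degenerate with $\Lambda$ would make Malinnikova's exponent $\alpha$ and constants depend on $\Lambda$ in an uncontrolled way, destroying the required $e^{C\sqrt{\Lambda}}$ bound. The point you flag as merely ``delicate'' is in fact where this route breaks down.

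The resolution in the paper is to \emph{keep} the lifting to $\mathbb{R}^{d}$ with $d=n+1$ and exploit that Malinnikova's theorem for generalized Cauchy--Riemann systems applies to \emph{gradients}: $F=\nabla_{x,t}\hat u$ solves the $\Lambda$-independent div--curl system, which is a factor of the Laplacian, and Malinnikova propagates smallness of $|F|$ from subsets of a hyperplane with $\mathcal{C}_{\mathcal H}^{d-2+\delta}\ge m$, i.e.\ of codimension strictly less than \emph{two}. Since $d-2+\delta=n-1+\delta$, the set $\mathcal E=(\omega\cap B(x_0,r))\times\{0\}$ qualifies exactly under your hypothesis, and the original sum is recovered as the gradient component $u(x)=\partial_t\hat u(x,0)$ on the hyperplane $\{t=0\}$. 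In short: the codimension budget is doubled not by changing the ambient space back to $\mathbb{R}^n$, but by applying the propagation of smallness to $\nabla\hat u$ rather than to $\hat u$. Without this step your proof of the spectral inequality does not go through; the remainder of your argument (the dyadic splitting $u=\pi_\Lambda u+(I-\pi_\Lambda)u$, dissipation of the high frequencies, and the final smoothing) is standard and agrees with the paper, modulo the minor point that the paper states the spectral inequality with $\|\Pi_\Lambda u\|_{L^2(\Omega)}$ on the left, since upgrading to $L^\infty(\Omega)$ requires more boundary regularity than Lipschitz and locally star-shaped.
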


\begin{remark}
Note that the observability estimate \eqref{eq:Observability_quantitative} is quantitative in function of $T$ only. If one wants to obtain a quantitative estimate in function of the Hausdorff content of $\omega$, we can establish the following statement in the spirit of \cite{AEWZ14}. For $x_0 \in \Omega$, $R \in (0,1]$ be such that $B(x_0, 4R) \subset \Omega$, for every $m, \delta > 0$, there exists $C=C(\Omega,\delta, R, m)>0$ such that for every measurable subset $\omega \subset B(x_0,R)$ with $\mathcal C^{n-1+\delta}_{\mathcal H}(\omega) \geq m$, one has that for every $T>0$, for every $u_0 \in L^2(\Omega)$, the solution $u$ of \eqref{eq:Heat_equation} satisfies
\begin{equation}
\label{eq:Observability_quantitative_mT}
\|u (T,\cdot)\|_{L^{\infty}(\Omega)} \leq C e^{C/T} \int_0^T \sup_{x \in \omega} |u(t,x)|dt.
\end{equation}
\end{remark}

\bigskip

Some comments with respect to the geometrical assumptions on $\Omega$ and $\omega$ are in order.
\begin{itemize}
    \item  When $\omega \subset \Omega$ is a \textit{\textbf{nonempty open set}}, \cite{FR71} proved \eqref{eq:Observability_definition_standard} in the one-dimensional case by moment's method, then more than twenty years later, \cite{LR95} and \cite{FI96} generalized independently this result to the multi-dimensional case by using Carleman estimates. When $\Omega$ is $C^{\infty}$-smooth, \cite{LR95} actually used local elliptic Carleman estimates to prove a spectral estimate for linear combination of eigenfunctions (see \cite[Theorem 4.3]{JL99} for a precise statement), then the authors introduced the now so-called Lebeau-Robbiano's method for passing from a spectral estimate to the observability of the heat equation (see for instance \cite[Theorem 2.2]{Mil10} or \cite[Theorem 2.1]{BPS18}). When $\Omega$ is $C^2$, \cite{FI96} directly proved observability estimates for the heat equation with the help of global parabolic Carleman estimates. Our proof of \Cref{thm:observability_main_result} follows Lebeau-Robbiano's method, so our crucial points would consist in deriving new spectral estimates.
    \item When $\omega$ is only assumed to be measurable with \textit{\textbf{positive Lebesgue measure}} i.e. $|\omega|>0$ and $\Omega$ is Lipschitz and locally star-shaped, \cite{AEWZ14} proved \eqref{eq:Observability_definition_standard}. 
Their result is actually based on the obtaining of spectral estimate of Jerison-Lebeau's type together with the analytic properties of linear combination of eigenfunctions inside the domain $\Omega$. It is worth mentioning that as far as we know the obtaining of \eqref{eq:Observability_definition_standard} for general bounded Lipschitz domains $\Omega$ with observation localized in measurable sets $\omega$ such that $|\omega|>0$ or even for nonempty open sets $\omega$ is open (see \cite{Bar20} for some progress in that direction).
\item When $\Omega$ is $W^{2, \infty}$, \cite{BM23} proved \eqref{eq:Observability_definition} for measurable subsets $\omega$ with possibly zero-Lebesgue measure but with \textit{\textbf{positive Hausdorff content}} $\mathcal C^{n-1+\delta}_{\mathcal H}(\omega)>0$ for a $\delta \in (0,1)$ that is a priori close to $1$ by using recent quantitative propagation of smallness results for gradient of solutions to $A$-harmonic functions from \cite{LM18}. In particular, they went a little bit beyond \cite{AEWZ14} proving the observability from measurable sets that are of zero $n$-dimensional Lebesgue measure. Their result is also valid replacing the flat Laplacian $-\Delta$ by a divergence elliptic operator with Lipschitz coefficients, i.e. $-\mathrm{div}(A(x) \nabla \cdot)$ with a matrix-valued Lipschitz and uniformly elliptic diffusion $A$. Note also that it was the first observability result for the heat equation from a measurable set with zero-Lebesgue measure in the multi-dimensional case.
\item In the one-dimensional case, i.e. $n=1$, \cite{Y.Zhu24} exactly obtained \Cref{thm:observability_main_result} replacing the flat Laplacian $-\partial_{x}^2$ by a rough divergence elliptic operator,  $-\partial_{x}(a(x) \partial_x \cdot)$ where $a \in L^{\infty}(\Omega)$ and uniformly elliptic. Their main result is a \cite{LM18}-like quantitative propagation of smallness result for gradient of $a$-harmonic functions in the two-dimensional case for sets with Hausdorff dimensions $\mathrm{dim}_{\mathcal H}(E) >0$. It is obtained exploiting the very specific relation between $a$-harmonic functions and holomorphic functions in the complex plane. 
\item The assumption made in Theorem \ref{thm:observability_main_result} on $\omega$, that $\text{dim}_{\mathcal H}(\omega)>n-1$, is sharp in the following sense: for every $M>0$, there exists a subset $\omega$ with $\mathcal C^{n-1}_{\mathcal H}(\omega)\geq M$ such that \eqref{eq:Observability_definition} does not hold from $\omega$. Indeed, assume that $\Omega$ is $C^1$ and let $\omega := \{ x \in \Omega\ ;\ \varphi_{\lambda}(x) = 0\}$ be a nodal set of a Laplace eigenfunction $\varphi_{\lambda}$ associated to the eigenvalue $\lambda >0$ with Dirichlet boundary conditions on $\partial\Omega$ i.e.
\begin{equation}
	\label{eq:Laplaceeigenfunction}
		\left\{
			\begin{array}{ll}
				-\Delta \varphi_{\lambda} =\lambda \varphi_{\lambda} & \text{ in } \Omega,\\
				\varphi_{\lambda} = 0 & \text{ on }   \partial\Omega,
			\end{array}
		\right.
\end{equation}
such that $\|\varphi_{\lambda}\|_{L^{\infty}(\Omega)}=1$. Then it is a well-known fact that 
$\mathcal C^{n-1}_{\mathcal H}(\omega)\geq c \sqrt{\lambda}$ for $\lambda$ sufficiently large, see \cite{LMNN21}. Taking $\varphi_{\lambda}$ as a initial data, we obtain that the solution $u$ of \eqref{eq:Heat_equation} is given by $u(t,x) = e^{-\lambda t} \varphi_{\lambda}(x)$ so the left hand side of the observability estimate \eqref{eq:Observability_definition} is bounded from below by $e^{-\lambda T}$, while the right hand side vanishes. Hence, the inequality \eqref{eq:Observability_definition} cannot hold for this specific $\omega$.

\begin{remark}
\label{rmk:approximate_observability}
Actually, in the last point an even weaker property fails, the so-called approximate observability. The heat equation \eqref{eq:Heat_equation} is said to be approximately observable in time $T>0$ from $\omega$ if $u_{|(0, T)\times \omega}=0$ implies $u(T, \cdot)=0$. This is equivalent to $\omega$ not being included in the nodal set of any eigenfunction \eqref{eq:Laplaceeigenfunction} (see Appendix \ref{app:approximate_observability}). It is clear that observability \eqref{eq:Observability_definition} is stronger than approximate observability. To avoid that issue of having a $(n-1)$-dimensional subset $\omega$ contained in a nodal set, \cite{CZ04} allowed $\omega$ to vary in time. In a different way, that obstruction is removed in \cite{GL20} using both Dirichlet and Neumann traces for the observation. They proved that the $L^2$-observability from a nonempty interior hypersurface always holds in that setting. Since every singular set $\mathcal{S}=\{x\in \Omega: \varphi_\lambda(x)=|\nabla \varphi_\lambda|(x)=0\}$ has $(n-2)$-Hausdorff dimension (see e.g. \cite{Han94}) and in view of Theorem \ref{thm:observability_main_result}, it would be interesting to know if this latter result still holds for every subset $\omega$ with Hausdorff dimension strictly greater than $n-2$. 
\end{remark}

\end{itemize}

\bigskip

From \Cref{thm:observability_main_result} and a duality argument in the spirit of classical H.U.M. \cite[Theorem 2.48]{Cor07} we can deduce a null-controllability result for
the associated controlled heat equation
 \begin{equation}
	\label{eq:heat_bounded_controlled}
		\left\{
			\begin{array}{ll}
				  \partial_t y  -\Delta y= h 1_{\omega} & \text{ in }  (0,T) \times \Omega, \\
				  y = 0 & \text{ in }  (0,T) \times \partial\Omega,\\
				y(0, \cdot) = y_0 & \text{ in }\Omega.
			\end{array}
		\right.
\end{equation}
In the controlled system \eqref{eq:heat_bounded_controlled}, at time $t \in [0,+\infty)$, $y(t,\cdot) : \Omega \to \R$ is the state and $h(t,\cdot) : \omega \to \R$ is the control.

In the following, we denote by $\mathcal M(\Omega)$ the space of Borel measures on $\Omega$. Recall that the norm of $\mathcal{M}(\Omega)$ is defined as
\begin{equation}
    \label{eq:normeborel}
    \| \mu \|_{\mathcal M(\Omega)} = \sup_{f \in C_c^0(\Omega)} \frac{\left|\int_{\Omega} f d \mu \right|}{\|f\|_{\infty}}.
\end{equation}

\begin{theorem}
\label{thm:controlledbounded}
Let $T>0$ and $\omega \subset \Omega$ be a closed measurable subset satisfying $\text{dim}_{\mathcal H}(\omega) > n-1$. Then \eqref{eq:heat_bounded_controlled} is null-controllable at time $T$, that is for every $y_0 \in L^2(\Omega)$, there exists $h \in L^{\infty}(0,T;\mathcal M(\Omega))$ supported in $(0,T)\times\omega$ such that the (very)-weak solution $y$ of \eqref{eq:heat_bounded_controlled} satisfies $y(T,\cdot) = 0$. 

More precisely, if $\delta >0$ and $\omega \subset \Omega$ is a closed measurable set satisfying $\mathcal C^{n-1+\delta}_{\mathcal H}(\omega)>0$, there exists $C=C(\Omega, \delta, \omega)>0$, for every $y_0 \in L^2(\Omega)$, there exists $h \in L^{\infty}(0,T;\mathcal M(\Omega))$ supported in $(0,T)\times\omega$ satisfying
\begin{equation}
    \label{eq:estimatecontrolmanifoldswild}
    \sup_{t \in (0,T)} \|h(t)\|_{\mathcal M(\Omega)} \leq C  e^{\frac CT} \norme{y_0}_{L^2(\Omega)},
\end{equation}
such that the (very)-weak solution $y$ of \eqref{eq:heat_bounded_controlled} satisfies $y(T,\cdot) = 0$.
\end{theorem}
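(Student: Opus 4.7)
The plan is to deduce \Cref{thm:controlledbounded} from \Cref{thm:observability_main_result} by a Hahn--Banach duality argument, adapted to the non-Hilbertian observation norm $L^1(0,T;C(\omega))$ that appears on the right-hand side of \eqref{eq:Observability_quantitative}. Since $\omega$ is closed and $\Omega$ is bounded, $\omega$ is compact; then by Riesz representation and the standard Bochner duality for separable preduals, the dual of $L^1(0,T;C(\omega))$ may be identified with $L^\infty_{w^*}(0,T;\mathcal{M}(\omega))$, which is precisely the target space for the control $h$.

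First, I would reformulate the observability estimate for the backward adjoint equation. Given $\varphi_T \in L^2(\Omega)$, let $\varphi$ solve
\begin{equation*}
-\partial_t \varphi - \Delta \varphi = 0 \text{ in } (0,T)\times \Omega, \quad \varphi = 0 \text{ on } (0,T)\times\partial\Omega, \quad \varphi(T,\cdot)=\varphi_T.
\end{equation*}
The time-reversed function $u(t,\cdot) := \varphi(T-t,\cdot)$ is a forward heat solution with $u(0,\cdot)=\varphi_T$ and $u(T,\cdot)=\varphi(0,\cdot)$, so \Cref{thm:observability_main_result} applied to $u$ gives
\begin{equation*}
\|\varphi(0,\cdot)\|_{L^\infty(\Omega)} \leq C e^{C/T}\int_0^T \sup_{x\in\omega}|\varphi(t,x)|\,dt.
\end{equation*}
Interior parabolic regularity ensures $\varphi\in C^\infty((0,T)\times \Omega)$, so $\varphi|_{(0,T)\times\omega} \in L^1(0,T;C(\omega))$.

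For $y_0\in L^2(\Omega)$ fixed, I would next define the linear form
\begin{equation*}
\ell_{y_0}\bigl(\varphi|_{(0,T)\times\omega}\bigr) := -\int_\Omega y_0(x)\,\varphi(0,x)\,dx
\end{equation*}
on the subspace $E := \{\varphi|_{(0,T)\times\omega}: \varphi_T\in L^2(\Omega)\}$ of $L^1(0,T;C(\omega))$. The observability estimate above, together with $\|y_0\|_{L^1(\Omega)}\leq |\Omega|^{1/2}\|y_0\|_{L^2(\Omega)}$, shows that $\ell_{y_0}$ is well defined on $E$ and
\begin{equation*}
|\ell_{y_0}(\varphi|_{(0,T)\times\omega})| \leq C|\Omega|^{1/2}e^{C/T}\|y_0\|_{L^2(\Omega)}\int_0^T\sup_{x\in\omega}|\varphi(t,x)|\,dt.
\end{equation*}
By Hahn--Banach, I extend $\ell_{y_0}$ to a continuous linear form on $L^1(0,T;C(\omega))$ with the same norm, and identify it with $h\in L^\infty(0,T;\mathcal{M}(\omega))$ satisfying the bound \eqref{eq:estimatecontrolmanifoldswild}. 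Extending $h$ by zero to $\Omega\setminus\omega$ yields a control in $L^\infty(0,T;\mathcal{M}(\Omega))$ supported in $(0,T)\times\omega$ that obeys
\begin{equation*}
\int_0^T\int_\omega \varphi(t,x)\,dh(t,x)\,dt = -\int_\Omega y_0(x)\,\varphi(0,x)\,dx \quad \text{for every } \varphi_T\in L^2(\Omega).
\end{equation*}

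Finally, since $h$ is merely measure-valued, I would define $y$ as the unique very-weak solution of \eqref{eq:heat_bounded_controlled} by transposition against the smooth backward heat semigroup. The transposition identity then reads
\begin{equation*}
\int_\Omega y(T,x)\,\varphi_T(x)\,dx = \int_\Omega y_0(x)\,\varphi(0,x)\,dx + \int_0^T\int_\omega \varphi(t,x)\,dh(t,x)\,dt = 0
\end{equation*}
for every $\varphi_T\in L^2(\Omega)$, so $y(T,\cdot)=0$. The main technical obstacle in this plan is the rigorous duality identification $L^1(0,T;C(\omega))^*\simeq L^\infty_{w^*}(0,T;\mathcal{M}(\omega))$, which relies crucially on the compactness of $\omega$ (hence the closedness assumption), together with the careful construction of very-weak solutions to \eqref{eq:heat_bounded_controlled} with measure-valued source terms; once these ingredients are in place, the Hahn--Banach HUM mechanism runs as in the Hilbert setting.
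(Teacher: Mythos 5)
Your argument is correct and is precisely the route the paper has in mind: the paper omits the proof of \Cref{thm:controlledbounded}, stating that it is a standard adaptation of \cite[Section 5]{BM23} starting from \Cref{thm:observability_main_result}, and that adaptation is exactly your Hahn--Banach/HUM duality between $L^1(0,T;C(\omega))$ and $L^\infty_{w^*}(0,T;\mathcal M(\omega))$ combined with a transposition (very-weak) solution concept. The only point to make explicit is that ``closed'' must be understood as closed in $\R^n$ (so that $\omega$ is compact and $C(\omega)^*\simeq\mathcal M(\omega)$ applies), which is what the paper intends.
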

The proof of \Cref{thm:controlledbounded} will be omitted because it is a standard adaptation of \cite[Section 5]{BM23} starting from \Cref{thm:observability_main_result}. We insist on the fact that a first difficulty in the analysis is to give a meaning of the concept of (very)-weak solution to \eqref{eq:heat_bounded_controlled} with a control $h=h(t)$, taking its values in $\mathcal M(\Omega)$ and supported in $\omega$. \\
\medskip

Despite the sharpness in the scale of Hausdorff dimension of \Cref{thm:observability_main_result} just demonstrated, our second goal is to present specially constructed families of sets with dimension less than or equal to $n-1$ for which the heat equation remains observable; see \Cref{sec:MeasurablesetsLower} below. In fact, from \cite{Dol73}, we know that the heat equation \eqref{eq:Heat_equation} in $\Omega=(0,1)$ can be observed from a single, specially chosen point $\omega = \{x_0\} \subset \Omega$. Starting from this example in the one-dimensional case, we ask whether this type of specific set can be extended to the multi-dimensional case. It turns out that, when $\Omega = (0,1)^n$ with $n \geq 2$, it can be shown that \eqref{eq:Heat_equation} is not observable from a finite number of points; see \Cref{prop:MultiD_finite_observable_set} below. Based on this negative result, we employ a different strategy to construct sets with small dimension. First, in the one-dimensional case, when $\Omega=(0,1)$, using an elementary Remez-type inequality, see \Cref{prop:prop}, we exhibit a large class of sets from which the heat equation is observable; see \Cref{cor:one-dim-gamma}. In particular, this class includes all sets with positive logarithmic capacity (and therefore, by Frostman's lemma, all sets of positive dimension). Additionally, it contains many well-spaced countable sets (thus having zero logarithmic capacity), a prototypical example being $ \omega_\alpha = \{k^{-\alpha} : k = 1, 2, \ldots \}$ with $\alpha > 0$. As a result, by observing that one can take “Cartesian products of spectral estimates”, see \Cref{prop:spectral_cartesian_product2} below, we are able to construct closed observable sets of zero dimension in $\Omega = (0,1)^n$ by taking $\omega = \omega_\alpha \times \omega_\alpha \times \cdots \times \omega_\alpha$. We also refine the strategy by taking “Cartesian products of an observability inequality and a spectral estimate”; see \Cref{prop:Obversability+spectral} below. For instance, this allows us to show that $\{x_0\} \times \omega'$ where $\omega' \subset \Omega' \subset \R^{n-1}$ is an observation set for $(0,1) \times \Omega'$, assuming $x_0$ is a carefully chosen point and $\dim_{\mathcal H}(\omega') > n-2$. A few natural open questions arise immediately. First, is $\omega_{\text{exp}} = \{2^{-k} : k = 1, 2, \ldots \}$ an observation set in $(0,1)$? Secondly, in higher dimensions, could certain algebraic curves serve as replacements for a single point with prescribed algebraic properties? At present, it remains an open question whether simple examples such as a line with irrational slope or a parabola are observation sets for the heat equation on $(0,1)^2$; see \Cref{q:line} below.

\section{Proof of the main observability estimate, i.e. Theorem~\ref{thm:observability_main_result}}

In order to prove \Cref{thm:observability_main_result}, we will follow the classical Lebeau-Robbiano's method, \cite{LR95}, consisting in establishing first spectral estimate. The unbounded operator $-\Delta$ with domain $H^2(\Omega) \cap H_0^1(\Omega)$ is self-adjoint with compact resolvant in $L^2(\Omega)$ so it admits an orthonormal basis of eigenfunctions $(\varphi_k)_{k \geq 1}$, associated to the sequence of real eigenvalues $(\lambda_k)_{k \geq 1}$. Given $\Lambda >0$, we introduce the spectral projector $\Pi_{\Lambda}$ as follows
\begin{equation}\label{e:spec-proj}
    \Pi_{\Lambda} u = \sum_{\lambda_k \leq \Lambda} \langle u, \varphi_k\rangle_{L^2} \varphi_k,\qquad \forall u   \in L^2(\Omega).
\end{equation}

\subsection{The spectral estimate}

The main result of this section states the following spectral estimates.
\begin{theorem}
\label{thm:spectralopendirichlet}
For all $\delta >0$, for every $\omega \subset \Omega$ such that $\mathcal{C}_{\mathcal H}^{n-1+\delta}(\omega) >0$, there exists $C=C(\Omega,\delta,\omega)>0$ such that for every $\Lambda >0$, we have
\begin{equation}
\label{eq:spectraldirichlet}
   \norme{\Pi_{\Lambda} u}_{L^{2}(\Omega)} \leq C e^{C \sqrt{\Lambda}} \sup_{x \in \omega} \left|(\Pi_{\Lambda} u)(x) \right|\qquad \forall u \in L^2(\Omega).
\end{equation}
\end{theorem}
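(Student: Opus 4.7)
The plan is to follow the Lebeau-Robbiano ``ghost dimension'' strategy: extend $v = \Pi_\Lambda u$ to a harmonic function in one additional variable and then transfer information about $v$ via a propagation of smallness on that lift. First, using countable subadditivity of the Hausdorff content and the locally star-shaped assumption on $\Omega$, I would reduce to the situation where $\omega \subset B(x_0, r_0) \subset\subset \Omega$ with $\mathcal C_{\mathcal H}^{n-1+\delta}(\omega) > 0$ and $B(x_0, 4r_0) \subset \Omega$; the transfer of the resulting local spectral estimate back to all of $\Omega$ is handled by a standard three-ball / elliptic Carleman chain applied to the ghost-dimensional extension below, exactly as in \cite{AEWZ14}.

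Next, I introduce the Poisson-type lift
\begin{equation*}
w(x,s) \ = \ \sum_{\lambda_k \leq \Lambda} \langle u, \varphi_k\rangle_{L^2}\, \cosh(\sqrt{\lambda_k}\, s)\, \varphi_k(x),\qquad (x,s) \in \Omega \times \R,
\end{equation*}
which is harmonic on $\Omega \times \R$, vanishes on $\partial\Omega \times \R$, and satisfies $w(\cdot,0) = v$. The pointwise bound $\cosh(\sqrt{\lambda_k}\, s) \leq e^{\sqrt\Lambda}$ for $|s| \leq 1$ and $\lambda_k \leq \Lambda$, combined with orthonormality of the $\varphi_k$, yields $\|w\|_{L^2(\Omega \times (-1,1))} \leq C e^{\sqrt\Lambda}\|v\|_{L^2(\Omega)}$; interior elliptic regularity then upgrades this to $\|w\|_{L^\infty(B(x_0,2r_0) \times (-1/2,1/2))} \leq C e^{C\sqrt\Lambda}\|v\|_{L^2(\Omega)}$.

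The heart of the argument is a propagation of smallness for the $(n{+}1)$-dimensional harmonic function $w$ from the set $E = \omega \times \{0\}$. Since $\dim_{\mathcal H}(E) = \dim_{\mathcal H}(\omega) > n-1 = (n+1)-2$, this set sits strictly above the dimensional threshold at which such propagation becomes available. The gradient $\nabla w$ satisfies the Cauchy-Riemann / Riesz system $\mathrm{div}(\nabla w) = 0$, $\mathrm{curl}(\nabla w) = 0$; applying Malinnikova's propagation of smallness for such systems --- either by a Marstrand-type slicing of $E$ to reduce to the two-dimensional case, where almost every 2D slice is guaranteed to meet $E$ in a set of positive $\delta$-dimensional content thanks to $\dim_{\mathcal H}(E) > n-1$, or through a direct higher-dimensional incarnation of the same estimate --- should produce constants $\alpha \in (0,1)$ and $C>0$, depending only on $\delta$ and $\mathcal C_{\mathcal H}^{n-1+\delta}(\omega)$, such that
\begin{equation*}
\|w\|_{L^\infty(B(x_0,r_0)\times\{0\})} \ \leq\ C\,\Bigl(\sup_{E} |w|\Bigr)^{\alpha} \|w\|_{L^\infty(B(x_0,2r_0)\times(-1/2,1/2))}^{1-\alpha}.
\end{equation*}

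Combining this interpolation inequality with the $L^\infty$ bound from the previous step, passing from $L^\infty(B)$ to $L^2(B)$ by Cauchy-Schwarz, and propagating from $B$ to $\Omega$ via the three-ball argument mentioned in the first step, I obtain $\|v\|_{L^2(\Omega)} \leq C e^{C\sqrt\Lambda}\, \|v\|_{L^2(\Omega)}^{1-\alpha}\, (\sup_\omega |v|)^{\alpha}$; dividing through by $\|v\|_{L^2(\Omega)}^{1-\alpha}$ then gives \eqref{eq:spectraldirichlet}. The main obstacle is the propagation-of-smallness step: the observation set lies barely above the critical Hausdorff dimension, so organizing the slicing or direct higher-dimensional use of Malinnikova's Cauchy-Riemann propagation of smallness --- and tracking the dependence of the Hölder exponent $\alpha$ on $\mathcal C_{\mathcal H}^{n-1+\delta}(\omega)$ \emph{independently of $\Lambda$} --- is where the essential technical work lies.
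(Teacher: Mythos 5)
Your overall architecture (localize to a ball $B(x_0,r)$ with $B(x_0,4r)\subset\Omega$ via subadditivity of the Hausdorff content, lift $v=\Pi_\Lambda u$ to a harmonic function in a ghost variable, apply Malinnikova's propagation of smallness from $\omega\times\{0\}$, and close with the $L^2$--to--ball estimate of \cite{AEWZ14}) is exactly the paper's. But there is a genuine gap at the central step, and it comes from your choice of lift. Malinnikova's theorem applies to solutions of generalized Cauchy--Riemann systems, i.e.\ to the \emph{gradient} $\nabla w$, and yields
\begin{equation*}
\sup_{\mathcal K}|\nabla w|\ \le\ C\,\Bigl(\sup_{E}|\nabla w|\Bigr)^{\alpha}\,\|\nabla w\|_{L^\infty(B_1)}^{1-\alpha},
\end{equation*}
not the inequality you display for $w$ itself; there is no propagation-of-smallness theorem for the \emph{values} of a harmonic function from a set of dimension $d-2+\delta$ contained in a hyperplane (a harmonic function can vanish on an entire hyperplane). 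With your $\cosh$ lift one has $w(x,0)=v(x)$, $\partial_s w(x,0)=0$ and $\nabla_x w(x,0)=\nabla v(x)$, so the observation term $\sup_{E}|\nabla w|$ equals $\sup_{\omega}|\nabla v|$ --- the gradient of the eigenfunction combination on $\omega$, which is not controlled by $\sup_\omega|v|$ and is not the quantity in \eqref{eq:spectraldirichlet}. The fix, which is the one essential trick of the paper's proof, is to take the $\sinh$ lift $\hat u(x,t)=\sum_{\lambda_k\le\Lambda}\langle u,\varphi_k\rangle\,\lambda_k^{-1/2}\sinh(\sqrt{\lambda_k}\,t)\,\varphi_k(x)$: then $\hat u(\cdot,0)=0$, $\nabla_x\hat u(\cdot,0)=0$ and $\partial_t\hat u(x,0)=v(x)$, so that $|\nabla\hat u|=|v|$ identically on the slice $t=0$, and the gradient estimate transfers verbatim into the desired pointwise observation of $v$ on $\omega$.

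Two smaller remarks. The ``Marstrand-type slicing to two dimensions'' you suggest is unnecessary (and not obviously workable near the critical dimension): Malinnikova's result is already stated in $\R^d$ for sets contained in a hyperplane with $\mathrm{cap}_{d-2+\epsilon}$ (equivalently, by Frostman, $(d-2+\delta)$-Hausdorff content) bounded below, which is exactly the configuration $\mathcal E=(\omega\cap B(x_0,r))\times\{0\}\subset\R^{n}\times\{0\}$. And the issue you flag as the main difficulty --- the $\Lambda$-independence of $\alpha$ --- is automatic, since the lemma is applied to a fixed geometric configuration $(\mathcal K,\mathcal E,B_1)$ independent of $\Lambda$; all $\Lambda$-dependence enters only through the elliptic bound $\|\hat u\|_{W^{1,\infty}(B_1)}\le Ce^{C\sqrt\Lambda}\|v\|_{L^2(\Omega)}$.
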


On the one hand, \Cref{thm:spectralopendirichlet} improves the spectral estimates from \cite{AEWZ14}, \cite{AE13}, \cite{BM23} with respect to the Hausdorff dimension of the observation set and extends \cite{Y.Zhu24} to the multi-dimensional setting. On the other hand, \Cref{thm:spectralopendirichlet} is only valid for an elliptic operator with real analytic metric while the spectral estimate of \cite{BM23} is valid for $W^{1, \infty}$ metric and the one of \cite{Y.Zhu24} is valid for $L^{\infty}$ metric. We focus our attention on the case of the pure Laplacian (constant coefficients), but it is clear that the method extends to the case of real-analytic coefficients. Our main tool for proving \Cref{thm:spectralopendirichlet} is a propagation of smallness result for gradients of harmonic functions, \Cref{lem:malinnikovaharmonic} below, which is derived from a result of Malinnikova \cite{Mal04} on generalized Cauchy-Riemann systems. Then, this result is applied to the spectral inequality using the well-known lifting strategy of Jerison and Lebeau \cite{JL99}. 
\begin{remark}
We emphasize that we only estimate the $L^2(\Omega)$-norm of linear combination of eigenfunctions in \eqref{thm:spectralopendirichlet}, instead of the $L^{\infty}(\Omega)$-norm considered for instance in \cite{BM23} and \cite{Y.Zhu24}. We can replace the $L^2$-norm by the $L^{\infty}$-norm but assuming more regularity assumption on the boundary of $\Omega$ a priori. For instance, for $\Omega$ a $C^{\infty}$-smooth domain, let us remark that we have $ -\Delta (\Pi_{\Lambda} u) = \sum_{\lambda_k \leq \Lambda} \lambda_k \langle u, \varphi_k \rangle \varphi_k\ \text{in}\ \Omega$ and $\Pi_{\Lambda} u = 0\ \text{on}\ \partial\Omega$
so by elliptic regularity in $H^m(\Omega)$ for $0 \leq m \leq n$, Parseval's identity, the spectral estimate \eqref{eq:spectraldirichlet} and Sobolev embeddings we have
\begin{equation}
\label{eq:LinftyL2LinearCombination}
    \|\Pi_{\Lambda} u\|_{L^{\infty}(\Omega)} \leq C e^{C \sqrt{\Lambda}} \norme{\Pi_{\Lambda} u}_{L^{2}(\Omega)} \leq  C e^{C \sqrt{\Lambda}} \sup_{x \in \omega} \left|(\Pi_{\Lambda} u)(x) \right|.
\end{equation}
\end{remark}

\subsection{Proof of the spectral estimate}

We begin by deriving the following propagation of smallness estimate from the main result of \cite{Mal04}.

\begin{lemma}
\label{lem:malinnikovaharmonic}
Let $d \geq 2$, $m,\delta >0$, $\mathcal K \subset \subset B_1 \subset \R^d$. There exists $C=C(d,\mathcal K,\delta,m)>0$ and $\alpha = \alpha(d,\mathcal K,\delta,m) \in (0,1)$ such that for any $\mathcal E \subset \subset B_1$ which is a subset of a hyperplane and satisfies $\mathcal{C}_{\mathcal H}^{d-2+\delta}(\mathcal E) \ge m$ there holds, for every $u : B_2 \to \R$ satisfying $\Delta u = 0$ in $B_2$,
\begin{equation}
\label{eq:gradientmalinnikova}
    \sup_{x \in \mathcal K} |\nabla u(x)| \leq C \sup_{x \in \mathcal E} |\nabla u(x)|^{\alpha} \|\nabla u\|_{L^{\infty}(B_{1})}^{1- \alpha}. 
\end{equation}
\end{lemma}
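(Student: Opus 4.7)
The plan is to apply the main propagation-of-smallness theorem of Malinnikova \cite{Mal04} directly to the gradient $V := \nabla u$ and to read off the claimed inequality. The conceptual ingredient is the observation that when $u$ is harmonic on $B_2$, $V$ solves a generalized Cauchy--Riemann (Stein--Weiss) system: $\mathrm{div}\, V = \Delta u = 0$ since $u$ is harmonic, and $\partial_i V_j - \partial_j V_i = 0$ for all $1 \leq i, j \leq d$ since $V$ is a gradient. Thus $V$ belongs to the class of vector fields treated in \cite{Mal04}.

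Malinnikova's theorem delivers a three-ball / interpolation-type estimate for solutions of such Stein--Weiss systems in $\R^d$ from observation sets $\mathcal E$ contained in a codimension-one affine subspace with positive $(d-2+\delta)$-Hausdorff content. The codimension-one hypothesis is precisely what permits the threshold $(d-2+\delta)$, as opposed to the $(d-1+\delta)$ required for interior-type propagation in the Logunov--Malinnikova framework \cite{LM18}: in the planar case $d=2$ this recovers the classical Malinnikova-type statement that a holomorphic function on the disk is controlled on a compact subset by its modulus on a subset of a chord of positive $\delta$-Hausdorff content.

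After a rigid motion of $\R^d$ (which preserves both the Stein--Weiss structure and the Hausdorff content) I may assume that the affine hyperplane containing $\mathcal E$ is a coordinate one. Applying \cite{Mal04} to $V = \nabla u$ with observation set $\mathcal E$ then yields
\[
\sup_{\mathcal K} |V| \leq C \sup_{\mathcal E} |V|^{\alpha} \|V\|_{L^{\infty}(B_1)}^{1-\alpha},
\]
which is exactly \eqref{eq:gradientmalinnikova}, with $C>0$ and $\alpha \in (0,1)$ depending only on $d, \mathcal K, \delta, m$.

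The main point I would have to verify carefully is the uniformity of $C$ and $\alpha$: they must depend on $\mathcal E$ only through the lower bound $m$ on the Hausdorff content and the containment $\mathcal E \subset\subset B_1$, and not on finer geometric features of $\mathcal E$ or on the specific affine hyperplane in which it lies. Uniformity in $\mathcal E$ at a fixed hyperplane is internal to Malinnikova's argument, which is built on a Vitali-type covering of $\mathcal E$ at scales dictated by $m$ combined with iterated two-dimensional estimates on transverse slices; uniformity in the choice of hyperplane then follows from the translational and rotational invariance of the Laplacian together with compactness of the family of admissible affine hyperplanes meeting $B_1$.
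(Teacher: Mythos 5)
Your proposal follows essentially the same route as the paper: both verify that $\nabla u$ is divergence-free and curl-free, hence solves a generalized Cauchy--Riemann (Stein--Weiss) system that is a factor of the Laplacian, and then invoke Malinnikova's main theorem to get the two-constants inequality \eqref{eq:gradientmalinnikova}. The only detail the paper records that you elide is that Malinnikova's hypothesis is stated in terms of Riesz capacity rather than Hausdorff content, so one application of Frostman's lemma is needed to convert $\mathcal{C}_{\mathcal H}^{d-2+\delta}(\mathcal E)\ge m$ into a uniform lower bound on $\cp_{d-2+\delta/2}(\mathcal E)$.
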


In other words, \Cref{lem:malinnikovaharmonic} tells us that we are able to propagate smallness for gradients of harmonic functions from sets of Hausdorff dimension $d-2 + \delta$ for every $\delta >0$. Note that \cite{LM18} obtains the same result for $A$-harmonic function with $A$ satisfying elliptic and Lipschitz assumptions and for measurable sets $\mathcal E$ satisfying $\mathcal{C}_{\mathcal H}^{d-1-\delta}(\mathcal E) >0$ for some $\delta>0$ a priori small. As we will see during the proof, \Cref{lem:malinnikovaharmonic} is a straightforward corollary of \cite[Main Result]{Mal04}. Finally, as stated in \cite[Section 3.2]{Mal09}, one can certainly replace the standard Laplace operator $\Delta$ in \Cref{lem:malinnikovaharmonic} by a second order uniformly elliptic operator $L$ with analytic coefficients and propagation of smallness of gradient of solutions to $Lu=0$ still hold. This will explain why \Cref{thm:spectralopendirichlet} still hold for $L = \mathrm{div}(A \nabla \cdot)$ with $A$, a matrix-valued uniformly elliptic with real-analytic entries.
\begin{proof}The tedious part of the proof involves verifying that
$$F = (F_1, \dots, F_d) = \nabla u = \left(\frac{\partial u}{\partial_{x_1}}  , \dots, \frac{\partial u}{\partial_{x_d}} \right),$$ 
is a solution to a generalized Cauchy-Riemann system which is a factor of the Laplacian. Once that is established, we immediately obtain from \cite[Main Result]{Mal04} that \eqref{eq:gradientmalinnikova} holds for $\mathcal E$ such that the capacity $\cp_{d-2+\epsilon}(E) \ge \tilde m$ for some $\epsilon,\tilde m >0$. It is not relevant for our purposes to define capacity, only to note the well-known quantitative relationship between capacity and Hausdorff content facilitated by the Frostman lemma \cite[Theorem 8.8]{Mat99}. In particular, since $\mathcal C^{d-2+\delta}_{\mathcal H}(\mathcal E) \ge m$, there exists $\tilde m = \tilde m(m,\delta)>0$ such that $\cp_{d-2+\delta/2}(\mathcal E) \ge \tilde m$. Thus the lemma is proved apart from verifying that $F$ satisfies the desired generalized Cauchy-Riemann system. To this end, notice first that
\begin{equation}
\label{eq:generalizedCGR}
    \sum_{j=1}^d \frac{\partial F_j}{\partial_{x_j}} = 0,\qquad \frac{\partial F_j}{\partial_{x_k}} = \frac{\partial F_k}{\partial_{x_j}},\ 1 \leq j < k \leq d.
\end{equation}
Indeed, the first equation of \eqref{eq:generalizedCGR} corresponds to $\mathrm{div}(\nabla u) = 0$ while the second corresponds to Schwarz's theorem on mixed derivatives by using the fact that because $u$ is harmonic on $B_2$ then it is $C^{\infty}$-smooth on $B_1$. By defining the operator $A$ as 
\begin{equation}
    \label{eq:defA}
    A : \R^d \to \R^m\qquad AF = \left(\sum_{j=1}^d \frac{\partial F_j}{\partial_{x_j}}, \left(\frac{\partial F_j}{\partial_{x_k}} - \frac{\partial F_k}{\partial_{x_j}}\right)_{ 1 \leq j < k \leq d}\right),
\end{equation}
with $m = 1+(d(d-1))/2$. The last equations of $A F = 0$ imply by Poincaré lemma that $F$ is locally a gradient. Then the first equation of $AF=0$ implies that $F$ is locally a gradient of a harmonic function so each component of $F$ is harmonic then the system of equations \eqref{eq:generalizedCGR} is a generalized Cauchy-Riemann system in the sense of \cite[Section 4.1]{Mal04}. Finally, the operator $A$ defined in \eqref{eq:defA} is a factor of the Laplacian in the sense of \cite[Definition 4.1]{Mal04} because
\begin{multline*}
    \frac{\partial}{\partial_{x_i}} \left( \sum_{j=1}^d \frac{\partial F_j}{\partial_{x_j}} \right) -  \sum_{k < i } \frac{\partial}{\partial_{x_k}} \left(\frac{\partial F_k}{\partial_{x_i}} - \frac{\partial F_i}{\partial_{x_k}}\right) +  \sum_{i < k \leq d} \frac{\partial}{\partial_{x_k}} \left(\frac{\partial F_i}{\partial_{x_k}} - \frac{\partial F_k}{\partial_{x_i}}\right)\\
    = \Delta F_i,\ 1 \leq i \leq d.
\end{multline*} 
\end{proof}

With \Cref{lem:malinnikovaharmonic} at hand, we can prove \Cref{thm:spectralopendirichlet}.
\begin{proof}[Proof of \Cref{thm:spectralopendirichlet}]
From the hypothesis that $\dim_{\mathcal H}(\omega)>n-1$, we get that there exists $\delta >0$ such that 
\begin{equation}
\label{eq:omegacontent}
  \mathcal C^{n-1+\delta}_{\mathcal H}(\omega) >0.
\end{equation}
We then split the proof into two steps.

\medskip
\textbf{Step 1: Observation on a suitable ball.} 

First, let us prove that there exist $x_0 \in \Omega$, $r>0$ such that 
\begin{equation}
    \label{eq:suitableball}
    m:=\mathcal C^{n-1+\delta}_{\mathcal H}(\omega \cap B(x_0,r)) >0\ \text{and}\ B(x_0,4r) \subset \Omega.
\end{equation}
Note that $\Omega = \cup_{k \in \N^*} \Omega_k$ where $\Omega_k = \{x \in \Omega\ ;\ \mathrm{dist}(x, \partial\Omega) \geq 1/k\}$. By sub-additivity of the Hausdorff-content and \eqref{eq:omegacontent} we have that there exists $k \in \N^*$ such that $\mathcal C^{n-1+\delta}_{\mathcal H}(\omega \cap \Omega_k) >0$. Then, there exists $x_0 \in \Omega_k$ such that $\mathcal C^{n-1+\delta}_{\mathcal H}(\omega \cap B(x_{0},1/8k)) >0$ so \eqref{eq:suitableball} holds with such an $x_0$ and $r=1/8k$ because $\mathrm{dist}(x_0,\partial\Omega) \geq 1/k$.\\

For $u \in L^2(\Omega)$ and $\Lambda \geq 1$, we only need to prove $\eqref{eq:spectraldirichlet}$ assuming that
\begin{equation}
    \label{eq:defu}
    u = \Pi_{\Lambda} u.
\end{equation}
Then we have from \cite[Theorem 3]{AEWZ14},
\begin{equation}
\label{eq:spectraldirichletRestrictedToballL2App}
   \norme{u}_{L^{2}(\Omega)} \leq C e^{C \sqrt{\Lambda}} \sup_{x \in B(x_0,r)} \left|u(x) \right|.
\end{equation}
Note that we have used \cite{AEWZ14} instead of the more classical reference \cite{JL99} because we are only assuming that $\Omega$ is a bounded Lipschitz domain, locally star-shaped.

\medskip

\textbf{Step 2: Observation on $\omega$.}
Let us take 
\begin{equation}
\label{eq:defhatu}
     \hat{u}(x,t) =  \sum_{\lambda_k \leq \Lambda} \left\langle u, \varphi_k \right\rangle_{L^2(\Omega)} \frac{\sinh(\sqrt{\lambda_k }t )}{\sqrt{\lambda_k}} \varphi_k(x)\qquad (x,t) \in \Omega \times (-2,+2).
\end{equation}
One can check that 
\begin{equation}
\label{eq:equationhatu}
    \Delta_{x,t} \hat{u} = 0 \qquad (x,t) \in \Omega \times (-2,+2).
\end{equation}
Up to a translation, let us assume that $x_0 = 0$ and reducing $r>0$ if necessary, let us consider $B_{1}$ the unit ball in $\R^{n+1} = \R^d$ and $\mathcal K \subset \R^{d}$ such that
$$ B(x_0,r) \times (-1/2,+1/2) \subset \subset \mathcal K \subset\subset B_1 \subset \subset \Omega \times (-1,+1),$$
and
$$ \mathcal E = (\omega \cap B(x_0,r)) \times \{0\} \subset  \R^{n} \times \{0\}.$$
Then we can apply the propagation of smallness for gradients of harmonic functions i.e. \eqref{eq:gradientmalinnikova} from \Cref{lem:malinnikovaharmonic}, 
\begin{equation}
    \sup_{(x,t) \in \mathcal K} |\nabla \hat{u}(x,t)| \leq C \sup_{(x,t) \in \mathcal E} |\nabla \hat{u}(x,t)|^{\alpha} \|\nabla \hat{u}\|_{L^{\infty}(B_1)}^{1- \alpha}. 
\end{equation}
In particular we get
\begin{equation}
\label{eq:PropagationSmallness_Proof}
    \sup_{x \in B(x_0,r)} |\partial_t \hat{u}(x,0)| \leq C \left(\sup_{x \in \omega \cap B(0,r)} |\partial_t \hat{u}(x,0)|\right)^\alpha \left\|\hat u\right\|^{1-\alpha}_{W_{t,x}^{1, \infty}(B_1)}.
\end{equation}
The left hand side of \eqref{eq:PropagationSmallness_Proof} exactly gives
\begin{equation}
\label{eq:lefthandside}
     \sup_{x \in B(x_0,r)} |\partial_t \hat{u}(x,0)| =  \sup_{x \in B(x_0,r)} |u(x)|.
\end{equation}
The first right hand side term of \eqref{eq:PropagationSmallness_Proof} exactly gives
\begin{equation}
\label{eq:righthandside1}
    \sup_{x \in \omega \cap B(x_0,r)} |\partial_t \hat{u}(x,0)| = \sup_{x \in \omega \cap B(x_0,r)} |u(x)|  \leq \sup_{x \in \omega} |u(x)|.
\end{equation}
By \eqref{eq:equationhatu}, we have that $\hat{u}$ is harmonic so is $C^{\infty}$-smooth by Weyl's lemma. Then by local estimates on harmonic functions, see for instance \cite[Section 2.2, Theorem 7]{Eva10} and Parseval's equality applied to \eqref{eq:defhatu}, the second right hand side term of \eqref{eq:PropagationSmallness_Proof} can be estimated as
\begin{equation}
\label{eq:righthandside2}
    \left\|\hat u\right\|_{W_{t,x}^{1, \infty}(B_1)} \leq C \left\|\hat u\right\|_{L^2(\Omega \times (-1,+1))}\leq C e^{C \sqrt{\lambda}} \norme{u}_{L^{2}(\Omega)}.
\end{equation}
Therefore, we have from \eqref{eq:PropagationSmallness_Proof}, \eqref{eq:lefthandside}, \eqref{eq:righthandside1} and \eqref{eq:righthandside2},
\begin{equation}
\label{eq:FinStep2}
    \sup_{x \in B(x_0,r)} |u(x)| \leq C e^{C \sqrt{\lambda}} (\sup_{x \in \omega} |u(x)|)^{\alpha} \norme{u}_{L^{2}(\Omega)}^{1-\alpha}.
\end{equation}
We conclude the proof of \eqref{eq:spectraldirichlet} by gathering \eqref{eq:spectraldirichletRestrictedToballL2App} and \eqref{eq:FinStep2}. This ends the proof of \Cref{thm:spectralopendirichlet}.
\end{proof}

\subsection{Adaptation of the Lebeau-Robbiano's method}

With the help of the spectral estimate of \Cref{thm:spectralopendirichlet}, one can now prove \Cref{thm:observability_main_result}. Notice that by using the spectral estimate \eqref{eq:spectraldirichlet}, the proof of the observability estimate  \eqref{eq:Observability_quantitative} is by now quite classical and originally comes from the Lebeau-Robbiano's method for obtaining the null-controllability of the heat equation starting from a spectral estimate, see \cite{LR95} and \cite[Section 6]{LRL12}. This strategy was latter extended by Miller in \cite{Mil10}. 
Note that it has to be adapted to the particular functional setting of the spectral estimate \eqref{eq:spectraldirichlet}. Here we follow the arguments of \cite[Proof of Theorem 3 in Section 4]{BM23} and \cite[Remark 2 in Section 2]{AEWZ14}. 
\begin{proof}[Proof of \Cref{thm:observability_main_result}]
From \eqref{eq:spectraldirichlet} and \cite[Corollary 4.1]{BM23}, we have the following result. There exists $A>0,\ C>0$ such that that for all $0< t_1 <t_2\leq T$, $u_0 \in L^{2}(\Omega)$, the solution $u$ of \eqref{eq:Heat_equation} satisfies
\begin{equation}\label{eq:firsttelescoping}
 e^{- \frac{A}{(t_2 -t_1)}} \| u(t_2,\cdot) \|_{L^2(\Omega)} - e^{- \frac{2A}{(t_2- t_1)}} \| u(t_1,\cdot) \|_{L^2(\Omega)} \leq C \int_{t_1} ^{t_2}  \sup_{x \in \omega} | u(s,x)| ds.
 \end{equation}
We now define $l = T/2,\ l_1 = 3T/4,\  l_{m+1} -l = 2^{-m} (l_1-l),\ m \geq 1$. We apply \eqref{eq:firsttelescoping} with $t_1 = l_{m+1}$ and $t_2 = l_m$, 
 \begin{equation*}
 e^{-\frac{A} {l_m - l_{m+1}} } \| u(l_m,\cdot)\|_{L^2(\Omega)} -  e^{-\frac{2A} {(l_m - l_{m+1})} } \| u(l_{m+1},\cdot)\|_{L^2(\Omega)}\leq C \int_{l_{m+1}}^{l_m} \sup_{x \in \omega} | u(s,x)| ds. 
 \end{equation*}
Noticing that $\frac 2 {(l_m - l_{m+1})}=\frac 1 {( l_{m+1} - l_{m+2})}$,  we get
 \begin{equation}\label{eq:secondtelescoping}
 e^{-\frac{A} {l_m - l_{m+1}} } \| u(l_m,\cdot)\|_{L^2(\Omega)} -  e^{-\frac{A} {l_{m+1} - l_{m+2}} } \| u(l_{m+1},\cdot)\|_{L^2(\Omega)}\leq C \int_{l_{m+1}}^{l_m}  \sup_{x \in \omega} | u(s,x)| ds ,
 \end{equation}
then summing the telescopic series~\eqref{eq:secondtelescoping}, and using that 
$ \lim_{m\rightarrow + \infty} e^{-\frac{A} {l_{m+1} - l_{m+1}} } =0, $ we get
 $$e^{-\frac{A} {l_1 - l_{2}} } \| u(l_1,\cdot)\|_{L^2(\Omega)} \leq C \int_{l}^{l_1}  \sup_{x \in \omega} | u(s,x)| ds. $$
From the definition of $l, l_1,l_2$ we immediately deduce
 $$ \| u(l_1,\cdot)\|_{L^2(\Omega)} \leq C e^{C/T} \int_{0}^{T}  \sup_{x \in \omega} | u(s,x)| ds,  $$
 then from the $L^2-L^{\infty}$ regularizing effect of the heat equation, see for instance \cite[Proposition 3.5.7]{CH98}, we obtain
 $$ \| u(T,\cdot)\|_{L^{\infty}(\Omega)} \leq C T^{-n/4} \| u(l_1,\cdot)\|_{L^2(\Omega)} \leq C e^{C/T} \int_{0}^{T}  \sup_{x \in \omega} | u(s,x)| ds,  $$
 so the quantitative observability estimate \eqref{eq:Observability_quantitative}.
\end{proof}

\section{Measurable sets with lower Hausdorff dimension}
\label{sec:MeasurablesetsLower}

This section is devoted to study the observability of \eqref{eq:Heat_equation} in some particular cases when the observable sets $\omega$ has a Hausdorff dimension lower than $n-1$. In fact, we exemplify cases of zero dimension in arbitrarily large ambient dimension.

\subsection{Observability from a finite subset}

To set the stage, we recall that in the case of the one-dimensional heat equation posed on $\Omega =(0,1)$, Dolecki fully described in \cite{Dol73} the situation when $\omega=\{x_0\}$ in terms of certain algebraic properties of the number $x_0$. Dolecki’s result in fact precisely describes the gap between nodal sets of the eigenfunctions and observation sets. Namely, observation points are characterized by their not being too close (on the wavelength scale) to a nodal set, which in Dolecki’s formulation means $x_0$ is not well-approximated by rationals numbers.  By analogy with respect to the one-dimensional case, the natural extension to the $n$-dimensional case should be: “$\omega$ is not well-approximated by the nodal sets of the Laplace eigenfunctions.” Such remark already appears in \cite[Section 1.3]{Let19}. In \cite{Sam15}, Samb precised Dolecki's statement by showing that, in the case when $x_0$ is an algebraic number of order $d>1$, the observability inequality holds at any time $T>0$. He also provided an estimation of the observability cost, which we record for later use.

\begin{theorem}[\cite{Sam15}]\label{thm:1D_Samb}
Let $x_0 \in (0,1)$ be an irrational algebraic number of order $d>1$. There exists a positive constant $C>0$ such that for any $T>0$, for every $u_0 \in L^2(\Omega)$, the solution $u$ of \eqref{eq:Heat_equation} satisfies
$$\|u(T,\cdot)\|^2_{L^{\infty}(0,1)} \leq Ce^{\frac CT} \int_0^T |u(t,x_0)|^2 dt.$$
\end{theorem}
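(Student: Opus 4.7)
The approach I would take is the Fattorini--Russell moment method, which is the canonical route for one-dimensional heat observability from a single interior point. The Lebeau--Robbiano style argument used for \Cref{thm:observability_main_result} is not available here: when $\omega=\{x_0\}$ a spectral inequality of the form $\|v\|_{L^2(0,1)}\le C(\Lambda)|v(x_0)|$ on $\Pi_\Lambda L^2$ cannot hold as soon as $\Pi_\Lambda L^2$ has dimension at least two, since one can always form a nonzero combination of two eigenfunctions that vanishes at $x_0$. Instead, the full time trace $t\mapsto u(t,x_0)$ on $(0,T)$ must be exploited, and one uses biorthogonal functionals to read off each Fourier mode individually.

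Concretely, I would proceed in three steps. \emph{(i) Algebraic lower bound.} Expand $u_0=\sqrt{2}\sum_{k\ge 1}a_k\sin(k\pi x)$, so $u(t,x_0)=\sqrt{2}\sum_{k\ge 1}a_k\sin(k\pi x_0)e^{-k^2\pi^2 t}$. Since $x_0$ is irrational algebraic of degree $d$, Liouville's inequality $|x_0-p/q|\ge cq^{-d}$, applied to the nearest integer $p=\lfloor kx_0+1/2\rfloor$, gives $\mathrm{dist}(kx_0,\mathbb{Z})\ge ck^{1-d}$ and hence
\[
|\sin(k\pi x_0)|\ge c'k^{-(d-1)},\qquad k\ge 1.
\]
\emph{(ii) Biorthogonal family.} Invoke the Fattorini--Russell construction of a family $\{q_k\}_{k\ge 1}\subset L^2(0,T)$ biorthogonal to $\{e^{-k^2\pi^2 t}\}_{k\ge 1}$, namely $\int_0^T q_k(t)e^{-j^2\pi^2 t}\,dt=\delta_{jk}$, with the norm bound $\|q_k\|_{L^2(0,T)}\le Ce^{C/T}e^{Ck}$. \emph{(iii) Inversion.} Testing the observation against $q_k$ extracts the mode, $a_k\sin(k\pi x_0)=\frac{1}{\sqrt{2}}\int_0^T u(t,x_0)q_k(t)\,dt$, so by Cauchy--Schwarz and (i),
\[
|a_k|\le Ck^{d-1}e^{C/T}e^{Ck}\,\|u(\cdot,x_0)\|_{L^2(0,T)}.
\]
Substituting into $u(T,x)=\sqrt{2}\sum_k a_k e^{-k^2\pi^2 T}\sin(k\pi x)$ yields $\|u(T,\cdot)\|_{L^\infty(0,1)}\le Ce^{C/T}\|u(\cdot,x_0)\|_{L^2(0,T)}\sum_{k\ge 1}k^{d-1}e^{Ck-k^2\pi^2 T}$, and completing the square in $k$ bounds the remaining sum by $C'e^{C'/T}$. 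Squaring the resulting inequality gives the theorem.

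The main obstacle is step (ii), the sharp construction of biorthogonals with exponential cost $Ce^{C/T}e^{Ck}$. The polynomial factor $k^{d-1}$ from Liouville is completely harmless, since it is dominated by the parabolic Gaussian decay $e^{-k^2\pi^2 T}$; what actually drives the short-time cost $e^{C/T}$ is the norm of the biorthogonal family. This construction proceeds via interpolation of entire functions of exponential type vanishing at the $\lambda_k$'s (a Paley--Wiener-type problem) and exploits the eigenvalue separation $\sqrt{\lambda_{k+1}}-\sqrt{\lambda_k}\ge\pi$ specific to the one-dimensional Dirichlet Laplacian, together with the convergence of $\sum 1/\sqrt{\lambda_k}$. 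In the plan above I invoke the Fattorini--Russell bound as a black box from the literature, but verifying it is where the substantive analysis sits.
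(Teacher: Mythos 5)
The paper gives no proof of \Cref{thm:1D_Samb}; it is imported verbatim from \cite{Sam15}, and the remark following it in the text confirms that Samb's argument hinges on exactly the diophantine input you isolate in step (i), namely the Liouville lower bound $|\sin(k\pi x_0)|\gtrsim k^{-(d-1)}$ for algebraic $x_0$ of degree $d$. Your moment-method reconstruction is the standard (and, as far as one can tell, the intended) proof: the observation that no spectral inequality can hold on a single point forces the use of the full time trace, and the combination of the Liouville bound, the Fattorini--Russell biorthogonal family with $\|q_k\|_{L^2(0,T)}\le Ce^{C/T}e^{C\sqrt{\lambda_k}}$, and the absorption of $k^{d-1}e^{Ck}$ by the Gaussian weight $e^{-k^2\pi^2 T}$ after completing the square is sound. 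One correction to your closing discussion of the black box: for $\lambda_k=k^2\pi^2$ the series $\sum 1/\sqrt{\lambda_k}=\sum 1/(k\pi)$ \emph{diverges}; the Müntz-type condition actually needed for the minimality of $\{e^{-\lambda_k t}\}$ and the existence of the biorthogonal family is $\sum 1/\lambda_k<\infty$, which does hold here. This slip does not affect the argument, since the bound you invoke from \cite{FR71} (and its quantitative refinements) is correct as stated.
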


From Samb's proof, it is clear that the result holds for every diophantine number (i.e. non Liouville number) since it  only involves that an irrational algebraic number is a diophantine number. We will not use this fact in the rest of the article, so we will stick to Samb's formulation of \Cref{thm:1D_Samb}. Note also that the observability in \Cref{thm:1D_Samb} is a little bit different from the one mainly used in this paper, i.e. \eqref{eq:Observability_definition}.

A straightforward dimension-counting argument shows that such a result cannot hold on domains for which the dimension of the eigenspaces $E_r(-\Delta)$ is unbounded as $r \to \infty$, in particular when $\Omega = (0,1)^n$ for $n \ge 2$.
\begin{proposition}\label{prop:MultiD_finite_observable_set}
Let $\Omega=(0,1)^n$ with $n\geq 2$ and $T>0$. For any finite subset $\omega \subset \Omega$, the heat equation \eqref{eq:Heat_equation} is not observable from $\omega$ at time $T$.
\end{proposition}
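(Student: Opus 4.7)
The plan is a dimension-counting argument exploiting the unbounded multiplicities of the Dirichlet Laplacian on the cube $(0,1)^n$ with $n\geq 2$: any finite observation set $\omega$ is too small to distinguish all the eigenfunctions inside a sufficiently degenerate eigenspace, so one can always produce an eigenfunction vanishing on $\omega$, which in turn yields a solution that defeats \eqref{eq:Observability_definition}.

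First I would recall the eigenstructure of the Dirichlet Laplacian on $\Omega=(0,1)^n$: the eigenfunctions are $e_K(x)=\prod_{j=1}^n \sin(\pi K_j x_j)$ indexed by $K\in(\N^*)^n$, with eigenvalue $\pi^2|K|^2$ where $|K|^2:=K_1^2+\cdots+K_n^2$, and are mutually $L^2$-orthogonal. Hence the multiplicity of the eigenvalue $\pi^2 N$ is exactly $\#\{K\in(\N^*)^n:|K|^2=N\}$.

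Next I would verify that this multiplicity is unbounded in $N$ when $n\geq 2$. For $n=2$ this follows from the classical identity $r_2(5^m)=4(m+1)$ counting the representations of $5^m$ as ordered sums of two signed integer squares; since $5^m$ is odd for $m\geq 1$, none of these representations has a zero component, so quotienting by the four sign choices produces at least $m+1$ ordered pairs $(K_1,K_2)\in(\N^*)^2$ with $K_1^2+K_2^2=5^m$. For $n>2$ one extends trivially by fixing $K_3=\cdots=K_n=1$, so the eigenspace of $-\Delta$ associated to $\lambda_m:=\pi^2(5^m+n-2)$ has dimension at least $m+1$.

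Given $\omega=\{x^{(1)},\ldots,x^{(k)}\}$ with $k=\#\omega$, I would choose $m$ with $m+1>k$ and consider the eigenspace $E_{\lambda_m}$. The linear evaluation map $\mathrm{ev}:E_{\lambda_m}\to\R^k$, $\varphi\mapsto(\varphi(x^{(j)}))_{j=1}^k$, has kernel of dimension at least $\dim E_{\lambda_m}-k\geq 1$, so there is a nonzero $\varphi\in E_{\lambda_m}$ vanishing at every $x^{(j)}$. Taking $u_0=\varphi$ as initial datum in \eqref{eq:Heat_equation}, the resulting solution $u(t,x)=e^{-\lambda_m t}\varphi(x)$ vanishes identically on $\omega$ for every $t\in(0,T)$, making the right-hand side of \eqref{eq:Observability_definition} equal to zero, while $\|u(T,\cdot)\|_{L^\infty(\Omega)}=e^{-\lambda_m T}\|\varphi\|_{L^\infty(\Omega)}>0$. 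This contradicts \eqref{eq:Observability_definition}. No serious obstacle is anticipated; the only nontrivial input is the classical number-theoretic fact $r_2(5^m)=4(m+1)$, which is elementary.
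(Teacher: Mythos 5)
Your argument is essentially the paper's: both exploit the unbounded multiplicity of the cube's Dirichlet eigenvalues via the sum-of-two-squares count for powers of $5$ (padding with $1$'s for $n>2$) and then produce an eigenfunction vanishing on $\omega$ by a dimension count on the evaluation map. One small slip: the absence of zero components among the representations of $5^m$ follows from $5^m$ not being a perfect square (true only for odd $m$), not from its oddness --- e.g.\ $25=5^2+0^2$ --- but restricting to odd exponents (as the paper does with $5^{2r-1}$) the multiplicity still tends to infinity and your proof goes through unchanged.
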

This proposition shows that the results of \cite{Dol73} and \cite{Sam15} are very specific to the one-dimensional case. This difference with the multi-dimensional case comes from the fact that, when $n\geq 2$, the degree of degeneracy of the energy levels of the operator $-\Delta$ tends to $+\infty$. Let us recall that in this case, the spectrum of $-\Delta$ is given by 
\begin{equation}\label{eq:Spectrum_Laplacian}
    \text{Sp}(-\Delta) =\left\{ r_1^2 +\cdots + r_n^2; \ (r_1, \cdots, r_n) \in (\mathbb N^*)^n\right\}
\end{equation}
and for all $r \in \text{Sp}(-\Delta)$, the eigenspace $E_r(-\Delta)$ associated to $r$ is
$$E_r(-\Delta) = \text{Span}_{\mathbb C}\left\{\varphi_{r_1} \otimes \varphi_{r_2} \otimes \cdots \otimes \varphi_{r_n}; \ r_1^2+\cdots+r_n^2=r\right\},$$
with dimension equal to the sum of squares counting function
$$s_n(r)=\#\left\{(r_1, \cdots, r_n) \in (\mathbb N^*)^n; \ r_1^2+\cdots +r_n^2=r\right\}.$$
If $n \ge 2$, then 
    \begin{equation}\label{e:rdn} \limsup_{r \to \infty} s_n(r) = \infty.\end{equation} 
Indeed, for $n=2$ one can explicitly compute $s_2(5^{2r-1}) = r$ by \cite[pp.140-142]{Bei64}. For $n\ge 2$, of course $s_n(r) \ge s_2(r)$ hence \eqref{e:rdn} holds.

\begin{proof}[Proof of Proposition~\ref{prop:MultiD_finite_observable_set}]
Let $\omega=\{x_1, \cdots, x_{N}\}\subset (0,1)^n$. By \eqref{e:rdn}, we can find $r$ such that $s_n(r) \ge N+1$. Since $\text{dim}(E_r(-\Delta))=s_n(r)$, it follows that the linear application $T : f \in E_r(-\Delta) \longmapsto (f(x_1), ..., f(x_{N})) \in \R^N$ is not one-to-one. This provides an eigenfunction $\psi$, associated to the eigenvalue $r$, such that 
$$\psi(x_1)=\cdots=\psi(x_{N})=0.$$
Notice that for all $t \in [0,T]$, $e^{t \Delta} \psi=e^{-rt} \psi$. This leads to 
$$\int_0^T \sup_{x \in \omega}|e^{t\Delta} \psi(x)| dt= \int_0^T e^{-rt} \sup\{|\psi(x_1)|, ..., |\psi(x_{N})|\} dt =0.$$ Since $\|\psi\|_{L^{\infty}(\Omega)} \neq 0$, the observability of \eqref{eq:Heat_equation} does not hold from $\omega$ at time $T>0$. This ends the proof of Proposition~\ref{prop:MultiD_finite_observable_set}.
\end{proof}

Note that the proof of \Cref{prop:MultiD_finite_observable_set} crucially uses the fact that the dimension of the eigenspaces $E_r(-\Delta)$ is unbounded as $r \to \infty$, a property that generically does not hold. According to the proof of \cite[Theorem 2.79]{Cor07}, the following more general result holds.
\begin{proposition}\label{prop:MultiD_finite_observable_set_Coron}
Let $\Omega$ be a bounded open set of $\R^n$, $n \geq 2$. For any finite subset $\omega \subset \Omega$, the heat equation \eqref{eq:Heat_equation} is not observable from $\omega$ at time $T$.
\end{proposition}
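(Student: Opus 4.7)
My plan is to adapt the dimension-counting argument used for \Cref{prop:MultiD_finite_observable_set}, following the approach of \cite[Theorem 2.79]{Cor07}. The difficulty is that for a generic bounded open $\Omega$ the Dirichlet Laplacian $-\Delta$ may have only simple eigenvalues, so no single eigenfunction can vanish at every $x_i$. Instead, I will construct, for each large $\Lambda$, an initial datum $u_0^{(\Lambda)}$ in the spectral subspace $V_\Lambda := \mathrm{span}\{\varphi_k : \lambda_k \le \Lambda\}$, normalized to $\|u_0^{(\Lambda)}\|_{L^2(\Omega)} = 1$, for which the ratio
\[
R_\Lambda := \frac{\|u^{(\Lambda)}(T,\cdot)\|_{L^\infty(\Omega)}}{\int_0^T \max_{1\le i\le N} |u^{(\Lambda)}(t,x_i)|\,dt}
\]
blows up as $\Lambda \to \infty$, contradicting \eqref{eq:Observability_definition}.

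First, by Weyl's law $\dim V_\Lambda \sim c_n |\Omega| \Lambda^{n/2}$, together with elementary linear algebra, I will choose a nonzero normalized $u_0^{(\Lambda)} \in V_\Lambda$ satisfying the $JN$ vanishing conditions $\Delta^j u_0^{(\Lambda)}(x_i) = 0$ for $0 \le j \le J-1$ and $1 \le i \le N$, which is possible as soon as $\dim V_\Lambda > JN$. These conditions kill all time-Taylor coefficients of $t \mapsto u^{(\Lambda)}(t,x_i)$ up to order $J-1$, giving $u^{(\Lambda)}(t,x_i) = O(t^J)$ near $t=0$. Controlling the Taylor remainder via the Bernstein-type estimate $\|\Delta^J w\|_{L^\infty(\Omega)} \lesssim \Lambda^{J+n/4}\|w\|_{L^2(\Omega)}$ valid for $w \in V_\Lambda$ yields
\[
\int_0^T \max_i |u^{(\Lambda)}(t,x_i)|\,dt \;\lesssim\; \Lambda^{n/4 - 1}\,\frac{(\Lambda T)^{J+1}}{(J+1)!},
\]
while the trivial lower bound $\|u^{(\Lambda)}(T,\cdot)\|_{L^\infty(\Omega)} \ge |\Omega|^{-1/2} e^{-\Lambda T}$ holds for free.

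The key optimization will be the choice $J+1 = \lceil e^2\, \Lambda T \rceil$: by Stirling's formula this reduces the $t$-integral to $\lesssim e^{-e^2 \Lambda T}$, whence
\[
R_\Lambda \;\gtrsim\; \Lambda^{1 - n/4}\, e^{(e^2 - 1)\Lambda T}\;\xrightarrow[\Lambda\to\infty]{}\;+\infty.
\]

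The main obstacle will be the dimension-counting constraint $\dim V_\Lambda > JN$: with $J \sim e^2 \Lambda T$ it reads $\Lambda^{n/2 - 1} \gtrsim TN$. For $n \ge 3$ this is readily satisfied for any $T>0$ by taking $\Lambda$ large, and the argument closes. For $n = 2$ the exponent $n/2-1$ vanishes and the constraint forces $TN \lesssim 1$; the scheme above then yields non-observability only for short times, and extending the conclusion to arbitrary $T>0$ requires combining it with the semigroup structure of the heat equation as in \cite[Theorem 2.79]{Cor07}. This iteration is the most delicate step and I would not rederive it here.
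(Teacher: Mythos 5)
The paper does not actually prove this proposition: it is stated as a consequence of ``the proof of \cite[Theorem 2.79]{Cor07}'' and no argument is reproduced, so your self-contained attempt is necessarily a different route. For $n \geq 3$ your argument is sound and complete: the functionals $w \mapsto \Delta^j w(x_i)$ are well defined on $V_\Lambda$ (eigenfunctions are smooth in the interior of any bounded open set), the vanishing conditions kill the time-Taylor coefficients since $\partial_t^j u(0,x_i) = \Delta^j u_0(x_i)$, interior elliptic estimates give the polynomial-in-$\Lambda$ pointwise bounds at the finitely many interior points $x_i$ (the global $L^\infty$ Bernstein estimate you quote would need boundary regularity, but you only need it at the $x_i$), and the lower bound $\dim V_\Lambda \gtrsim \Lambda^{n/2}$ holds for an arbitrary bounded open set by domain monotonicity against an inscribed cube. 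The Stirling optimization then closes the argument for every $T>0$ once $n/2-1>0$.

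The genuine gap is the case $n=2$, and it is not the routine step you suggest. Your own accounting shows the obstruction: you need $J \gtrsim \Lambda T$ orders of vanishing to beat $e^{-\Lambda T}$ (no choice of the constant in $J \sim \kappa \Lambda T$ avoids this, since $(\Lambda T)^J/J! \le e^{-(1+\epsilon)\Lambda T}$ forces $\kappa(\log\kappa - 1) > 1+\epsilon$), while Weyl only supplies $\dim V_\Lambda \sim c\Lambda$ degrees of freedom, so the construction exists only when $TN \lesssim 1$. Deferring the extension to large $T$ to ``the semigroup structure of the heat equation'' does not work, because the implication between time horizons goes the wrong way: observability at time $T'$ implies observability at every $T \ge T'$ (apply the estimate on $[T-T',T]$ to the solution restarted at $u(T-T')$), so non-observability at small times carries no information about large times. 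Nor do the obvious variants rescue the scheme in two dimensions: spectral windows or clusters of $JN+1$ eigenvalues must, again by Weyl, occupy an interval of width $\gtrsim JN$, which reproduces exactly the same constraint $TN \lesssim 1$. So for $n=2$ and $T$ large a different mechanism is needed (this is presumably where the actual content of \cite[Theorem 2.79]{Cor07} lies), and your proof as written establishes the proposition only for $n \ge 3$, together with the small-time regime in dimension two.
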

In light of this negative result, we would like to see, in higher dimensions, how small specially constructed observation sets can be, knowing that a collection of finitely many points is a priori forbidden. As we generate sets of even smaller dimension we make the following comment. Since the eigenfunctions of $-\Delta$ are continuous, $\Pi_\Lambda u$ is always continuous which entails
    \begin{equation}\label{e:closure} \sup_{x \in E} \abs{\Pi_\Lambda u(x)} = \sup_{x \in \overline{E}} \abs{\Pi_\Lambda u(x)}.\end{equation}
Therefore, to generate nontrivial examples, we should restrict our attention to sets whose \textit{closure} is low-dimensional (or to closed sets). An example to motivate this restriction is the zero-dimensional set $\omega=\mathbb Q^n \cap \Omega$ for which the spectral inequality \eqref{eq:spectraldirichlet} trivially holds by \eqref{e:closure}.

In fact, we will construct some families of low-dimensional observation sets by taking the Cartesian product of other sets of low dimension. The first, simple principle along these lines is we can take the Cartesian product of spectral inequalities; see Proposition \ref{prop:spectral_cartesian_product2} below. The second, more involved argument is that we can take Cartesian product of an observability estimate (with good control cost) and a spectral inequality, see \Cref{prop:Obversability+spectral} below. 

Thus, our starting point in the following improved spectral inequality in one dimension, which in fact has an extremely elementary proof.

\subsection{One-dimensional case}
Our starting point is the following very simple Bang-type estimate for analytic functions. It is a simplified version of \cite[Estimate (4.1)]{NSV04} or \cite[Lemma 4.7]{JM22} with one small modification, replacing the Lebesgue measure with the following quantity related to transfinite diameter, see \cite{Pom92}. For any set $E \subset [0,1]$, define 
	\begin{equation}\label{e:gamma} \gamma_k(E) = \sup_{\{x_i\}_{i=1}^k \subset E} \min_{i} \prod_{j \ne i} |x_i-x_j|.\end{equation}
The following one-dimensional Remez estimate holds. 
\begin{proposition}\label{prop:prop}
Let $(M_\ell)_{\ell \geq 0} \subset (0,\infty)^{\N}$ and suppose there exists $\ell_0 \in \N$ such that $M_{\ell_0}/\ell_0! \le \frac 12$. Then, for any $f \in C^\infty([0,1])$ satisfying
	\begin{equation}
 \label{e:d} \sup_{[0,1]}|f^{(l)}| \le M_\ell \sup_{[0,1]} |f|, \qquad \ell \geq 0,\end{equation}
and any $E \subset [0,1]$,
	\begin{equation}\label{e:remez} \sup_{[0,1]}|f| \le \frac{ 4\ell_0}{\gamma_{\ell_0}(E)} \sup_{E} |f|.\end{equation}
\end{proposition}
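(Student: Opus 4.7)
The plan is to prove Proposition \ref{prop:prop} via a Lagrange interpolation / Remez-type argument, treating $f$ on $[0,1]$ as "almost a polynomial of degree $\ell_0 - 1$" thanks to the derivative bound at the single level $\ell = \ell_0$. The factor $4\ell_0/\gamma_{\ell_0}(E)$ will emerge from estimating the Lebesgue constant of an interpolation scheme at a nearly optimal $\ell_0$-tuple of nodes from $E$.

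First, I would dispose of the trivial case $\gamma_{\ell_0}(E) = 0$ (the bound is vacuous) and otherwise, given $\gamma_{\ell_0}(E) > 0$, extract from the definition of the supremum a tuple of points $x_1, \ldots, x_{\ell_0} \in E$ satisfying
\[
\min_{1 \le i \le \ell_0} \prod_{j \ne i} |x_i - x_j| \ge \tfrac{1}{2}\gamma_{\ell_0}(E).
\]
The factor $1/2$ here is what later turns the coefficient $2\ell_0$ into $4\ell_0$. Then I would write $f = P + R$ on $[0,1]$, where $P$ is the Lagrange interpolant of $f$ at $x_1,\ldots,x_{\ell_0}$ and $R$ the classical remainder. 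Using $|x - x_j| \le 1$ for $x, x_j \in [0,1]$, each Lagrange basis polynomial $L_i(x) = \prod_{j\neq i}(x-x_j)/(x_i-x_j)$ is bounded by $\prod_{j\neq i}|x_i - x_j|^{-1}$, so
\[
|P(x)| \le \sup_E |f| \sum_{i=1}^{\ell_0} \prod_{j\neq i}\frac{1}{|x_i - x_j|} \le \frac{\ell_0}{\min_i\prod_{j\ne i}|x_i - x_j|}\sup_E|f| \le \frac{2\ell_0}{\gamma_{\ell_0}(E)}\sup_E |f|.
\]

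Next, for the remainder I would use the standard representation $R(x) = \frac{f^{(\ell_0)}(\xi)}{\ell_0!}\prod_{i=1}^{\ell_0}(x - x_i)$ for some $\xi \in [0,1]$; again each $|x-x_i|\le 1$, so by hypothesis \eqref{e:d},
\[
|R(x)| \le \frac{M_{\ell_0}}{\ell_0!}\sup_{[0,1]}|f| \le \tfrac{1}{2}\sup_{[0,1]}|f|.
\]
This is precisely the place where the assumption $M_{\ell_0}/\ell_0! \le 1/2$ gets consumed.

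Combining the two estimates and taking $\sup_{x\in[0,1]}$ yields
\[
\sup_{[0,1]}|f| \le \frac{2\ell_0}{\gamma_{\ell_0}(E)}\sup_E|f| + \tfrac{1}{2}\sup_{[0,1]}|f|,
\]
and absorbing the second term on the left gives \eqref{e:remez}. I do not anticipate any real obstacle: the argument is elementary and the only mild point to watch is the near-optimal choice of nodes (needed since $\gamma_{\ell_0}(E)$ is defined by a supremum that may not be attained), which is exactly what costs the extra factor of $2$.
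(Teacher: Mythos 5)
Your proposal is correct and follows essentially the same route as the paper: the Lagrange interpolation formula with remainder at a near-optimal $\ell_0$-tuple of nodes from $E$, the bound $M_{\ell_0}/\ell_0!\le\tfrac12$ to control the remainder, and absorption of the $\tfrac12\sup_{[0,1]}|f|$ term. The only (inconsequential) quibble is your attribution of the final constant: the passage from $2\ell_0$ to $4\ell_0$ comes from the absorption step, while the factor $2$ in $2\ell_0$ comes from the near-optimality of the nodes.
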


\begin{proof}
The proof relies chiefly on the following Lagrange interpolation formula (see for instance \cite[Chapitre 2, Section 1.2]{Dem16}). For any $x$, $x_1<\cdots < x_k$ in $[0,1]$, there exists $\xi$ in $[0,1]$ such that
	\begin{equation}\label{e:interp} f(x) = \sum_{i=1}^k f(x_i) \prod_{j \ne i} 
\frac{(x-x_j)}{(x_i-x_j)} + \frac{f^{(k)}(\xi)}{k!} \prod_{j} (x-x_j).\end{equation}
Choosing $\{x_i\}_{i=1}^k \subset E$ such that $2\min_i \prod_{j \ne i} \abs{x_i-x_j} \ge \gamma_k(E)$, it is immediate that
	\[ \left \vert \sum_{i=1}^n f(x_i) \prod_{j \ne i} \frac{(x-x_j)}{(x_i-x_j)} \right \vert \le  \frac{2k \cdot \sup_{E} |f| }{\gamma_k(E)} \]
To estimate the second term in \eqref{e:interp} we simply apply \eqref{e:d} to obtain 
	\[ \left\vert \frac{f^{(k)}(\xi)}{k!} \prod_{j} (x-x_j) \right\vert \le  \frac{M_k}{k!} \sup_{[0,1]}|f|.\] 
which is less than $\frac 12 \sup_{[0,1]} \abs{f}$ for $k=\ell_0$ hence \eqref{e:remez} holds.
\end{proof}

From Proposition \ref{prop:prop}, we can derive a spectral estimate for many sets if we can control $\gamma_k(\omega)$. First, if $\omega$ has positive logarithmic capacity (which in particular holds in $\text{dim}_{\mathcal H}(\omega) > 0$) then $\gamma_k(\omega) \gtrsim m^{k}$, see \cite[Proposition 9.6]{Pom92}. More generally though, we obtain an effective spectral inequality if $\omega$ satisfies
    \begin{equation}\label{e:gammak} \limsup_{k \to \infty} \frac{\log \left(-\log \gamma_k(\omega)\right)}{\log k} <2.\end{equation}
Notice that in the case of positive dimension this quantity equals $1$. 
\begin{corollary}\label{cor:one-dim-gamma}
    If $\Lambda \ge 1$ and $\omega$ satisfies \eqref{e:gammak}, then there exists $C>0$ and $\beta \in (0,1)$ such that
    \begin{equation}
    \label{eq:spectraltransfinitedimameter}
        \sup_{x \in [0,1]} \abs{v(x)} \le \exp\left( C \Lambda^\beta \right) \sup_{x \in \omega} |v(x)|,\qquad v=\sum_{\substack{k \in \mathbb N: \\ k^2 \le \Lambda}} v_k \sin(2 \pi k x).
    \end{equation}
\end{corollary}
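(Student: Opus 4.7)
The plan is to apply Proposition~\ref{prop:prop} to $f=v$ on the set $E=\omega$. Two ingredients are required: a derivative bound of the form \eqref{e:d} (giving the sequence $M_\ell$), and a lower bound on $\gamma_{\ell_0}(\omega)$ (coming from \eqref{e:gammak}).

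First I would verify the derivative bound. Since $v(x)=\sum_{k^2\le\Lambda}v_k\sin(2\pi k x)$ is a $1$-periodic real trigonometric polynomial of degree at most $N:=\lfloor\sqrt\Lambda\rfloor$, the classical Bernstein inequality yields
\[
\sup_{[0,1]}|v^{(\ell)}|\le (2\pi N)^\ell \sup_{[0,1]}|v|\le (2\pi\sqrt\Lambda)^\ell\sup_{[0,1]}|v|,\qquad \ell\ge 0,
\]
so hypothesis \eqref{e:d} holds with $M_\ell=(2\pi\sqrt\Lambda)^\ell$. Next I would choose $\ell_0$. By Stirling, $\ell_0!\ge(\ell_0/e)^{\ell_0}$, so the condition $M_{\ell_0}/\ell_0!\le 1/2$ will hold once
\[
\left(\tfrac{2\pi e\sqrt\Lambda}{\ell_0}\right)^{\ell_0}\le\tfrac12,
\]
which is satisfied for $\ell_0:=\lceil 4\pi e\sqrt\Lambda\,\rceil$. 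In particular $\ell_0\le C_0\sqrt\Lambda$ for an absolute constant $C_0$. Proposition~\ref{prop:prop} then gives
\begin{equation}\label{e:plan-remez}
\sup_{[0,1]}|v|\le\frac{4\ell_0}{\gamma_{\ell_0}(\omega)}\sup_{\omega}|v|.
\end{equation}

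It remains to control $1/\gamma_{\ell_0}(\omega)$ using the hypothesis \eqref{e:gammak}. By definition of $\limsup$, there exist $\beta'\in(0,2)$ and $k_0\in\mathbb N$ such that $-\log\gamma_k(\omega)\le k^{\beta'}$ for all $k\ge k_0$, hence $\gamma_k(\omega)\ge e^{-k^{\beta'}}$. For $\Lambda$ large enough that $\ell_0\ge k_0$, plugging this into \eqref{e:plan-remez} gives
\[
\sup_{[0,1]}|v|\le 4\ell_0\,\exp\!\bigl(\ell_0^{\beta'}\bigr)\sup_\omega|v|\le \exp\!\bigl(C\,\Lambda^{\beta'/2}\bigr)\sup_\omega|v|,
\]
after absorbing the polynomial prefactor and the constant $C_0^{\beta'}$ into the exponential. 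Setting $\beta:=\beta'/2\in(0,1)$ yields the conclusion for $\Lambda$ large; the range of small $\Lambda$ is handled by enlarging $C$, since on that range $\sup_\omega|v|$ and $\sup_{[0,1]}|v|$ differ by a bounded factor (both are bounded by $\sum|v_k|$ times an absolute constant, and any spectral estimate at fixed $\Lambda$ holds by a compactness/nondegeneracy argument provided $\omega$ is infinite, which is guaranteed by \eqref{e:gammak}).

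There is no serious obstacle here; the mild bookkeeping point is simply to make sure the choice of $\ell_0\asymp\sqrt\Lambda$ is large enough to trigger both the factorial/power-of-$\sqrt\Lambda$ comparison and the asymptotic regime in \eqref{e:gammak}, while still producing an exponent $\beta=\beta'/2<1$. Bernstein's inequality is what dictates the scale $\ell_0\asymp\sqrt\Lambda$; any subexponential decay of $\gamma_k(\omega)$ faster than $e^{-k^2}$ would be needed to push $\beta$ down to $1/2$ (the analogue of the bound in \eqref{eq:spectraldirichlet}), which is precisely the borderline excluded by the strict inequality in \eqref{e:gammak}.
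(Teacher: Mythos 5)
Your proposal is correct and follows essentially the same route as the paper: Bernstein's inequality gives \eqref{e:d} with $M_\ell=(C\sqrt\Lambda)^\ell$, Stirling forces $\ell_0\asymp\sqrt\Lambda$, and \eqref{e:gammak} bounds $4\ell_0/\gamma_{\ell_0}(\omega)$ by $\exp(C\Lambda^{\beta'/2})$ with $\beta=\beta'/2<1$. Your extra care with the small-$\Lambda$ regime and the observation that \eqref{e:gammak} forces $\omega$ to be infinite are harmless refinements of the same argument.
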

\begin{proof}
By the one-dimensional Bernstein inequality, $v$ satisfies \eqref{e:d} with $M_\ell = (C\sqrt{\Lambda})^\ell$. Therefore, by Stirling's formula, $(C\sqrt{\Lambda})^\ell / \ell! \le 1/2$ as soon as $\ell = A \sqrt{\Lambda}$ for some absolute constant $A \gtrsim C$. Therefore, applying \eqref{e:remez}, it only remains to estimate the constant
    \[ \frac{4 \ell_0}{\gamma_{\ell_0}(\omega)}, \quad \text{with} \ \ell_0 = A \sqrt{\Lambda}.\]
However, \eqref{e:gammak} implies that 
    \[ \frac{4 k}{\gamma_{k}(\omega)} \le \exp( C k^{\alpha}) \]
for some $\alpha < 2$ and the proof is concluded by taking $k=\ell_0=A\sqrt{\Lambda}$ and $\beta = \frac \alpha 2$.
\end{proof}
We now introduce a family of zero-dimensional sets, indexed by $\alpha > 0$,
\begin{equation}
\label{eq:defomegaalpha}
    \omega_\alpha = \left\{ i^{-\alpha}: i=1,2,\ldots \right\}.
\end{equation}
 We have the following crude estimate for $\gamma_k(\omega_\alpha)$ (recall $\gamma_k$ from \eqref{e:gamma}) which will suffice for our purposes, see \Cref{cor:spec-omega-alpha} below. 
\begin{lemma}\label{l:gamma-omega-alpha}
For each $\alpha >0$ there exists $c_1,c_2>0$ such that
    \begin{equation}\label{e:gamma-omega-alpha} \gamma_k(\omega_\alpha) \ge c_1 e^{c_2 k \log k}.\end{equation}
\end{lemma}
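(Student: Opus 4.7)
Before sketching a proof I must flag what appears to be a typo in the statement: as written, $\gamma_k(\omega_\alpha)\ge c_1 e^{c_2 k\log k}$ with $c_1,c_2>0$ cannot hold, since $\omega_\alpha\subset[0,1]$ forces $\gamma_k(\omega_\alpha)\le 1$. The only lower bound consistent with the role of this lemma---allowing the application of \Cref{cor:one-dim-gamma} to $\omega_\alpha$ via \eqref{e:gammak}---is
$$\gamma_k(\omega_\alpha)\;\ge\; c_1 e^{-c_2 k\log k},$$
which gives $\log(-\log\gamma_k)/\log k\to 1<2$. I will prove this corrected version. Since $\gamma_k$ is a supremum, it suffices to exhibit one good configuration, and the natural choice is the initial segment $x_i:=i^{-\alpha}$, $i=1,\dots,k$, of $\omega_\alpha$.

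The argument has two steps. First, I apply the mean value theorem to $t\mapsto t^{-\alpha}$ to get the pointwise estimate $|i^{-\alpha}-j^{-\alpha}|\ge \alpha\,\max(i,j)^{-\alpha-1}|i-j|$, and multiply over $j\ne i$ to reach
$$\prod_{j\ne i}|x_i-x_j|\;\ge\; \alpha^{k-1}\,(i-1)!\,(k-i)!\;\cdot\; i^{-(i-1)(\alpha+1)}\left(\frac{k!}{i!}\right)^{-(\alpha+1)}.$$
Second, taking logarithms and invoking Stirling with $i=ck$, $c\in(0,1)$, the $k\log k$ contributions combine to $[\,1-(\alpha+1)c-(\alpha+1)(1-c)\,]k\log k=-\alpha\,k\log k$, while all remaining terms collapse to a bounded-in-$c$ linear contribution $\Phi_\alpha(c)\,k$, where $\Phi_\alpha(c)=\log\alpha-1+(\alpha+1)(1-c)+c\log c+(1-c)\log(1-c)$ extends continuously to $[0,1]$ (the $c\log c$ and $(1-c)\log(1-c)$ terms vanish at the endpoints). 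Uniformly in $i$, this yields $\log\prod_{j\ne i}|x_i-x_j|\ge -\alpha k\log k - C(\alpha)k+O(\log k)$, which exponentiates to the target bound with any $c_2>\alpha$.

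The main technical point is uniformity in $i$: one must check that neither $\Phi_\alpha$ nor the boundary cases $i\in\{1,k\}$ (where the Stirling expansion of $(i-1)!$ or $(k-i)!$ degenerates) smuggle in an extra $k\log k$ term. At $i=1$ the product equals $\prod_{j=2}^k(1-j^{-\alpha})$, which is $\ge \exp(-C_\alpha k^{1-\alpha})$ for $\alpha<1$ and bounded below for $\alpha\ge 1$, both much better than $e^{-Ck\log k}$. At $i=k$ one computes $\prod_{j=1}^{k-1}(j^{-\alpha}-k^{-\alpha})=((k-1)!)^{-\alpha}\prod_{j=1}^{k-1}(1-(j/k)^\alpha)$, where the first factor is $e^{-\alpha k\log k + O(k)}$ by Stirling and the log of the second is a Riemann sum for $\int_0^1 \log(1-x^\alpha)\,dx$, hence $O(k)$, matching the claim. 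The polynomial clustering of $\omega_\alpha$ is essential for this: a purely geometric choice such as $x_i=2^{-\alpha i}$ would only produce $\gamma_k\ge e^{-Ck^2}$, which just fails \eqref{e:gammak}.
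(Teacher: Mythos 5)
You are right that the displayed bound contains a sign typo (since $\omega_\alpha\subset[0,1]$ forces $\gamma_k\le 1$, the intended estimate is $\gamma_k(\omega_\alpha)\ge c_1e^{-c_2k\log k}$, and the paper's own proof ends with $(k!)^{-\alpha}k^{-k}=e^{-(\alpha+1)k\log k+O(k)}$), and your corrected version is exactly what feeds into \eqref{e:gammak}. Your argument is correct and essentially the same as the paper's --- select the first $k$ points $i^{-\alpha}$, apply the mean value theorem, and estimate the resulting factorial products by Stirling --- with your bookkeeping merely sharper (exponent $-\alpha k\log k$ rather than the paper's cruder $-(\alpha+1)k\log k$) and more explicit about uniformity in $i$.
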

\begin{proof}
    By simply selecting the first $k$ terms of the sequence $(i^{-\alpha} )_{i \in \mathbb N^*}$, we have
        \[ \gamma_k(\omega_\alpha) \ge \inf_{i=1,\ldots,k} \prod_{\substack{j=1 \\ j \ne i}}^k \abs{ j^{-\alpha}-i^{-\alpha}}.\]
    Rewrite 
        \[ \prod_{\substack{ j=1 \\ j \ne i} }^{k} \abs{ j^{-\alpha}-i^{-\alpha} } = \prod_{\substack{ j=1 \\ j \ne i} }^{k} j^{-\alpha} i^{-\alpha} \abs{ j^{\alpha} - i^{\alpha} }.\]
    Obviously $\abs{ j^{\alpha} - i^{\alpha} }$ is minimized when $j$ is close to $i$ so by the mean value theorem $\abs{ j^{\alpha} - i^{\alpha} } \gtrsim i^{\alpha-1}$. Therefore, 
        \[ \gamma_k(\omega_\alpha) \gtrsim \inf_{i=1,\ldots,k} \prod_{j=1}^k j^{-\alpha} i^{-1} = \inf_{i=1,\ldots,k} (k!)^{-\alpha} i^{-k}.\]
    Clearly this infimum is attained when $i=k$ and then the form \eqref{e:gamma-omega-alpha} follows by Stirling's formula.
\end{proof}

\subsection{Spectral inequalities for Cartesian products}

Roughly speaking, the goal of this part consists in proving that spectral inequalities respect taking Cartesian products. Then we generate observable sets of zero dimension by taking Cartesian products of $\omega_{\alpha}$, defined above in \eqref{eq:defomegaalpha}.

To succinctly describe the spectral inequalities of the form \eqref{eq:spectraldirichlet}, we say $\mathrm{spec}(\Omega,\omega,C,\beta)$ holds if
$$ \|\Pi_\Lambda u\|_{L^{2}(\Omega)} \leq C \exp(C\Lambda^\beta) \sup_{x \in \omega} |\Pi_\Lambda u (x) |,\quad \forall \Lambda >0, \forall u \in L^2(\Omega),$$
where $\Pi_\Lambda$ is the spectral projection defined in \eqref{e:spec-proj}. 

\begin{proposition}
\label{prop:spectral_cartesian_product2}
Let $\Omega_j\subset \R^{n_j}$, $j=1,2$ and $\omega \subset \Omega_1 \times \Omega_2$. Let $\omega_2=\pi_2(\omega)$ be the projection of $\omega$ on $\Omega_2$. For each $y \in \omega_2$, we denote by 
    \[ \omega_y:=\pi_1(\omega\cap (\Omega_1 \times \{y\}))\] 
the horizontal slices of $\omega$. If there exists $C,\beta>0$ such that $\mathrm{spec}(\Omega_2,\omega_2,C,\beta)$ holds and for every $y \in \omega_2$, $\mathrm{spec}(\Omega_1,\omega_y,C,\beta)$ holds, then there exists $C'$ such that
    \[ \mathrm{spec}{(\Omega_1 \times \Omega_2, \omega,C',\beta)} \quad \text{holds.}\]
\end{proposition}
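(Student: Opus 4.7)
The natural approach exploits separation of variables for the Dirichlet Laplacian on the product: the joint eigenfunctions are $\varphi_j^{(1)}\otimes\varphi_k^{(2)}$ with eigenvalues $\lambda_j^{(1)}+\lambda_k^{(2)}$, where $(\varphi_j^{(1)},\lambda_j^{(1)})$ and $(\varphi_k^{(2)},\lambda_k^{(2)})$ denote the Dirichlet spectral data on $\Omega_1$ and $\Omega_2$, respectively. The plan is to expand $v := \Pi_\Lambda u$ in this joint basis, apply $\mathrm{spec}(\Omega_2,\omega_2,C,\beta)$ to each $\Omega_1$-Fourier coefficient of $v$, then use $\mathrm{spec}(\Omega_1,\omega_y,C,\beta)$ slice-by-slice to convert the resulting sup over $y\in\omega_2$ into a genuine sup over $(x,y)\in\omega$, and finally absorb a polynomial loss coming from the eigenvalue counting function into the exponential factor.

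Concretely, I would write
\[
v(x,y) = \sum_{j:\,\lambda_j^{(1)}\leq\Lambda} \varphi_j^{(1)}(x)\, F_j(y), \qquad F_j(y) := \langle v(\cdot,y),\varphi_j^{(1)}\rangle_{L^2(\Omega_1)},
\]
and observe that each $F_j$ lies in $\mathrm{Ran}(\Pi_\Lambda^{(2)})$ since its expansion is supported on $\lambda_k^{(2)}\leq \Lambda-\lambda_j^{(1)}\leq\Lambda$. Orthonormality of the $\varphi_j^{(1)}$ in $L^2(\Omega_1)$ combined with Fubini gives
\[
\|v\|_{L^2(\Omega_1\times\Omega_2)}^2 = \sum_{j:\,\lambda_j^{(1)}\leq\Lambda} \|F_j\|_{L^2(\Omega_2)}^2.
\]
Applying $\mathrm{spec}(\Omega_2,\omega_2,C,\beta)$ to $F_j$ together with the Cauchy--Schwarz bound $|F_j(y)|\leq\|v(\cdot,y)\|_{L^2(\Omega_1)}$ yields
\[
\|F_j\|_{L^2(\Omega_2)} \leq Ce^{C\Lambda^\beta}\sup_{y\in\omega_2}\|v(\cdot,y)\|_{L^2(\Omega_1)}.
\]
For each $y\in\omega_2$ the slice $v(\cdot,y)$ lies in $\mathrm{Ran}(\Pi_\Lambda^{(1)})$, so $\mathrm{spec}(\Omega_1,\omega_y,C,\beta)$ together with the inclusion $\{(x,y):x\in\omega_y\}\subset\omega$ gives
\[
\|v(\cdot,y)\|_{L^2(\Omega_1)} \leq Ce^{C\Lambda^\beta}\sup_{x\in\omega_y}|v(x,y)| \leq Ce^{C\Lambda^\beta}\, S, \qquad S := \sup_{(x,y)\in\omega}|v(x,y)|,
\]
hence $\|F_j\|_{L^2(\Omega_2)}\leq C^2 e^{2C\Lambda^\beta} S$ uniformly in $j$.

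Summing over the indices $j$ with $\lambda_j^{(1)}\leq\Lambda$, whose count $N(\Lambda)$ is bounded by $C_W\Lambda^{n_1/2}$ via the standard polynomial upper bound on the Dirichlet eigenvalue counting function of a bounded open set, produces $\|v\|_{L^2(\Omega_1\times\Omega_2)}^2 \leq C_W\Lambda^{n_1/2}\,C^4 e^{4C\Lambda^\beta}\,S^2$. Since $\beta>0$, the polynomial prefactor is absorbed into a slightly larger exponential, yielding $\mathrm{spec}(\Omega_1\times\Omega_2,\omega,C',\beta)$ for some $C'$. The main subtlety is that one cannot directly invoke a vector-valued spectral inequality for $y\mapsto v(\cdot,y)\in L^2(\Omega_1)$: the naive reduction to scalar components via an orthonormal basis of $L^2(\Omega_1)$ swaps a sup and a sum in the wrong direction. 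The Cauchy--Schwarz step on the individual Fourier coefficients $F_j$ sidesteps this at the cost of a polynomial-in-$\Lambda$ factor, which is harmless precisely because the common exponent $\beta$ is strictly positive.
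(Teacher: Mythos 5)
Your proof is correct and follows essentially the same route as the paper's: both expand $\Pi_\Lambda u$ in the $\Omega_1$-eigenbasis, note that each coefficient $F_j(y)$ (the paper's $c_{y,\lambda_p^1}$) is itself a low-frequency combination on $\Omega_2$ to which $\mathrm{spec}(\Omega_2,\omega_2,C,\beta)$ applies, control it pointwise on $\omega_2$ by the slice norm $\|v(\cdot,y)\|_{L^2(\Omega_1)}$ via Cauchy--Schwarz, apply $\mathrm{spec}(\Omega_1,\omega_y,C,\beta)$ on each slice, and absorb the Weyl-law cardinality loss into the exponential. The only difference is cosmetic (where in the chain the polynomial factor is paid), so no further comment is needed.
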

When $\omega=\omega_1 \times \omega_2$, notice $\omega_y=\omega_1$ for all $y \in \omega_2$, thus in this case we have the corollary
    \[ \Big( \mathrm{spec}(\Omega_j,\omega_j,C,\beta)\quad \text{holds},\qquad j=1,2\Big) \implies \mathrm{spec}(\Omega_1 \times \Omega_2,\omega_1\times\omega_2,C',\beta)\quad \text{holds}.\]

\begin{proof}
 Let $\Lambda >0$ and $u \in L^2(\Omega_1 \times \Omega_2)$. Let $(\varphi_k^1)_{k\in \nn}$ (resp. $(\varphi_k^2)_{k\in \nn}$) be an orthonormal basis of $L^2(\Omega_1)$ (resp. $L^2(\Omega_2)$) formed by eigenfunctions of $-\Delta_{\Omega_1}$ (resp. $-\Delta_{\Omega_2}$). Then $$\Pi_\Lambda u (x,y)=\sum_{\lambda_p^1+\lambda_q^2 \leq \Lambda} C_{p,q} \varphi^1_p(x)\varphi^2_q(y).$$
    The key observation is that for each $y \in \Omega_2$, define the function $w_y$ on $\Omega_1$ by $w_y = \Pi_\Lambda u(\cdot,y)$. We can explicitly write 
        \[ w_y(x) = \sum_{\lambda_p^1 \le \Lambda} c_{y,\lambda_p^1} \varphi_p^1(x), \quad c_{y,\lambda_p^1} = \sum_{\lambda_q^2 \le \Lambda -\lambda_p^1} C_{p,q} \varphi_q^2(y),\]
    to see first that the spectral inequality on $\Omega_1$ applies to $w_y$ to obtain for each $y \in \omega_2$,
        \[ \norm{w_y}_{L^2(\Omega_1)} \le C\exp (C \Lambda^\beta) \sup_{x \in \omega_y} \abs{w_y(x)} \le C\exp (C \Lambda^\beta) \sup_{(x,y) \in \omega} \abs{\Pi_\Lambda(x,y)},\]
    where the last inequality followed from $w_y(x)=\Pi_\Lambda u(x,y)$ and the fact that $\omega_y \subset \omega$ for $y \in \omega_2$.
    Therefore, if we can show
        \begin{equation}\label{e:goal-wy} \int_{\Omega_2} \norm{w_y}^2_{L^2(\Omega_1)} \,dy \le C'\exp(C'\Lambda^\beta) \sup_{y\in \omega_2}  \norm{w_y}_{L^2(\Omega_1)}^2,\end{equation}
    then the proof will be concluded by Fubini's theorem. Since the functions $y \mapsto c_{y,\lambda_p^1}$ are also linear combinations of eigenfunctions with eigenvalues less than $\Lambda$, we can apply the spectral inequality on $\Omega_2$ to each of them. Furthermore, by Parseval's identity
        \begin{equation}\label{e:parseval-w}\norm{w_y}^2_{L^2(\Omega_1)}  = \sum_{\lambda_p^1 \le \Lambda} \abs{c_{y,\lambda_p^1}}^2. \end{equation} 
    Therefore, by Fubini's theorem,
    \[ \int_{\Omega_2} \norm{w_y}^2_{L^2(\Omega_1)} \,dy = \sum_{\lambda_p^1 \le \Lambda} \int_{\Omega_2} \abs{c_{y,\lambda_p^1}}^2 \, dy \le C \exp (C \Lambda^\beta) \sum_{\lambda_p^1 \le \Lambda}\sup_{y \in \omega_2}  \abs{c_{y,\lambda_p^1}}^2. \]
    To bring the supremum outside the summation, we must pay a price of the cardinality of the sum. However, by the Weyl's law, this cardinality is polynomial (depending on $\Omega_2$) in $\Lambda$, so that
        \[  \sum_{\lambda_p^1 \le \Lambda}\sup_{y \in \omega_2}  \abs{c_{y,\lambda_p^1}}^2 \le C \Lambda^m  \sup_{\lambda_p^1 \le \Lambda}\sup_{y \in \omega_2}  \abs{c_{y,\lambda_p^1}}^2 \le C \Lambda^m \sup_{y \in \omega_2} \sum_{\lambda_p^1 \le \Lambda} \abs{c_{y,\lambda_p^1}}^2.\]
    Therefore, combining the above two displays, using \eqref{e:parseval-w}, and choosing $C'$ appropriately to absorb the polynomial growth, we obtain \eqref{e:goal-wy}.
\end{proof}

By combining \Cref{prop:prop} and \Cref{l:gamma-omega-alpha}, we obtain the following corollary.
\begin{corollary}\label{cor:spec-omega-alpha}
For any $\beta \in \left(\tfrac 12,1\right)$ there exists $C>0$ such that
    $\mathrm{spec}((0,1),\omega_\alpha,C,\beta)$ holds.
\end{corollary}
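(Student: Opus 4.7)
The plan is to combine Lemma \ref{l:gamma-omega-alpha} directly with the Remez-type estimate of Proposition \ref{prop:prop}, following exactly the strategy of Corollary \ref{cor:one-dim-gamma}, but tracking the constants more carefully so that the exponent $\sqrt{\Lambda}\log\Lambda$ produced by the sequence $\omega_\alpha$ can be absorbed into $\Lambda^\beta$ for any $\beta>\tfrac12$.

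First I would reduce to controlling $\sup_{[0,1]}|v|$ for $v = \Pi_\Lambda u$. Since $(0,1)$ has finite measure we have $\norme{v}_{L^2(0,1)}\le\norme{v}_{L^\infty(0,1)}$, so it suffices to prove
\[ \sup_{[0,1]}|v|\le C\exp(C\Lambda^\beta)\sup_{x\in\omega_\alpha}|v(x)|.\]
Every $v$ in the range of $\Pi_\Lambda$ is a linear combination of Dirichlet eigenfunctions $\sin(\pi k x)$ with $\pi^2 k^2\le\Lambda$, so the one-dimensional Bernstein inequality yields $\sup_{[0,1]}|v^{(\ell)}|\le(\sqrt{\Lambda})^\ell\sup_{[0,1]}|v|$, i.e.\ condition \eqref{e:d} with $M_\ell=\Lambda^{\ell/2}$. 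By Stirling's formula $\ell!\ge(\ell/e)^\ell$, so choosing $\ell_0=\lceil A\sqrt{\Lambda}\rceil$ with $A>2e$ gives
\[ \frac{M_{\ell_0}}{\ell_0!}\le\Big(\frac{e\sqrt{\Lambda}}{\ell_0}\Big)^{\ell_0}\le\Big(\frac{e}{A}\Big)^{\ell_0}\le\tfrac12,\]
so Proposition \ref{prop:prop} applies and yields
\[ \sup_{[0,1]}|v|\le\frac{4\ell_0}{\gamma_{\ell_0}(\omega_\alpha)}\sup_{\omega_\alpha}|v|.\]

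Next I would plug in Lemma \ref{l:gamma-omega-alpha}, which gives $\gamma_{\ell_0}(\omega_\alpha)\ge c_1 \exp(-c_2\ell_0\log \ell_0)$ (the growth of this lower bound must of course be exponentially decaying, since $\omega_\alpha\subset[0,1]$). Substituting $\ell_0\le A\sqrt{\Lambda}+1$ yields
\[ \frac{4\ell_0}{\gamma_{\ell_0}(\omega_\alpha)}\le C\sqrt{\Lambda}\,\exp\bigl(C\sqrt{\Lambda}\log\Lambda\bigr)\le C'\exp\bigl(C'\sqrt{\Lambda}\log\Lambda\bigr),\]
for a constant $C'$ depending only on $\alpha$.

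Finally, for any fixed $\beta\in(\tfrac12,1)$ one has $\sqrt{\Lambda}\log\Lambda=o(\Lambda^\beta)$ as $\Lambda\to\infty$, so there exists $C''=C''(\alpha,\beta)$ with $C'\sqrt{\Lambda}\log\Lambda\le C''\Lambda^\beta$ for all $\Lambda\ge 1$. Combining the displays gives
\[ \norme{v}_{L^2(0,1)}\le\sup_{[0,1]}|v|\le C''\exp(C''\Lambda^\beta)\sup_{x\in\omega_\alpha}|v(x)|,\]
which is exactly $\mathrm{spec}((0,1),\omega_\alpha,C'',\beta)$ for $\Lambda\ge 1$; the remaining range $\Lambda\in(0,1)$ is absorbed into the constant $C''$. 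There is no real obstacle here: the whole content of the argument is contained in the two previous results, and the only point requiring attention is the arithmetic that the Bernstein-induced scale $\ell_0\asymp\sqrt{\Lambda}$ together with the $\exp(-c_2\ell_0\log\ell_0)$ lower bound on $\gamma_{\ell_0}$ produces exactly the critical exponent $\tfrac12$, explaining why one can reach any $\beta>\tfrac12$ but not $\beta=\tfrac12$.
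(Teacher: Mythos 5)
Your proof is correct and follows exactly the route the paper intends: Bernstein gives \eqref{e:d} with $M_\ell=(\sqrt{\Lambda})^\ell$, one takes $\ell_0\asymp\sqrt{\Lambda}$ in Proposition~\ref{prop:prop}, and the lower bound of Lemma~\ref{l:gamma-omega-alpha} (which, as you rightly note, must read $\gamma_k(\omega_\alpha)\ge c_1e^{-c_2k\log k}$, the sign in the displayed exponent being a typo) turns the Remez constant into $\exp(C\sqrt{\Lambda}\log\Lambda)=\exp(O(\Lambda^\beta))$ for any $\beta>\tfrac12$. This is precisely the combination of Proposition~\ref{prop:prop}, Corollary~\ref{cor:one-dim-gamma} (with condition \eqref{e:gammak} holding with exponent $1<2$) and Lemma~\ref{l:gamma-omega-alpha} that the paper invokes, so nothing further is needed.
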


Hence the following result holds by using \Cref{cor:spec-omega-alpha}, \Cref{prop:spectral_cartesian_product2} and the so-called Lebeau-Robbiano method, see for instance \cite{Mil10}.

\begin{corollary}\label{cor:obs-omega-alpha-spec}
    Let $\alpha>0$, $\beta \in (\frac{1}{2},1)$ $n \ge 1$. Then, there exists $C>0$ such that $\mathrm{spec}((0,1)^n, \omega_{\alpha}^n, C, \beta)$ holds. Therefore, there exists $C>0$ such that for every $T>0$, for every $u_0 \in L^2(\Omega)$, the solution $u$ of \eqref{eq:Heat_equation} satisfies
    \begin{equation*}
        \|u (T,\cdot)\|_{L^{\infty}(\Omega)} \leq C \exp(C T^{\beta/(1-\beta)}) \int_0^T \sup_{x \in \omega} |u(t,x)|dt.
    \end{equation*}
\end{corollary}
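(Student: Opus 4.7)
The proof splits naturally into two parts: first, upgrading the one-dimensional spectral inequality from \Cref{cor:spec-omega-alpha} to an $n$-dimensional one on $(0,1)^n$ with observation set $\omega_\alpha^n$ via \Cref{prop:spectral_cartesian_product2}; then, converting that spectral inequality with subquadratic exponent $\beta$ into the observability estimate by the version of the Lebeau-Robbiano method due to Miller \cite{Mil10}. Crucially, both steps must be carried out with the \emph{same} parameter $\beta \in (\tfrac12,1)$, so I would fix $\beta$ at the start and track it through the whole argument.

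For the spectral inequality, I would argue by induction on $n \ge 1$. The base case $n=1$ is precisely \Cref{cor:spec-omega-alpha}, which yields $C_1 > 0$ with $\mathrm{spec}((0,1), \omega_\alpha, C_1, \beta)$. Assuming $\mathrm{spec}((0,1)^{n-1}, \omega_\alpha^{n-1}, C_{n-1}, \beta)$, apply \Cref{prop:spectral_cartesian_product2} with $\Omega_1 = (0,1)^{n-1}$, $\Omega_2 = (0,1)$, and $\omega = \omega_\alpha^{n-1} \times \omega_\alpha$. Since $\omega$ is a Cartesian product, the projection onto $\Omega_2$ is $\omega_2 = \omega_\alpha$ and the horizontal slice $\omega_y = \omega_\alpha^{n-1}$ is independent of $y \in \omega_2$. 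Thus the two hypotheses of \Cref{prop:spectral_cartesian_product2} hold with common constant $\max(C_{n-1},C_1)$ and exponent $\beta$, giving $\mathrm{spec}((0,1)^n, \omega_\alpha^n, C_n, \beta)$ for some new constant $C_n$. This closes the induction.

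The second step is to plug this spectral inequality into the Lebeau-Robbiano-Miller machinery. As in the proof of \Cref{thm:observability_main_result}, one combines the bound $\|\Pi_\Lambda u\|_{L^2} \le C e^{C\Lambda^\beta}\sup_\omega |\Pi_\Lambda u|$ with the high-frequency dissipation $\|(I-\Pi_\Lambda)e^{-t\Delta}u_0\|_{L^2} \le e^{-\Lambda t}\|u_0\|_{L^2}$ to produce, for $0 < t_1 < t_2 \le T$, an inequality of the form
\begin{equation*}
e^{-A(t_2-t_1)^{-\beta/(1-\beta)}}\|u(t_2,\cdot)\|_{L^2} - e^{-2A(t_2-t_1)^{-\beta/(1-\beta)}}\|u(t_1,\cdot)\|_{L^2} \le C\int_{t_1}^{t_2}\sup_{x\in\omega}|u(s,x)|\,ds,
\end{equation*}
the optimization $\Lambda \simeq (t_2-t_1)^{-1/(1-\beta)}$ being the key. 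Then, following exactly the telescoping strategy in the proof of \Cref{thm:observability_main_result} with the dyadic sequence $l_{m+1}-l = 2^{-m}(l_1-l)$ (on $[T/2, 3T/4]$) but with the geometric ratio adjusted so that $2(l_m-l_{m+1})^{-\beta/(1-\beta)} = (l_{m+1}-l_{m+2})^{-\beta/(1-\beta)}$ (i.e.\ ratio $2^{-(1-\beta)/\beta}$), the telescoping collapses and one obtains $\|u(3T/4,\cdot)\|_{L^2} \le C e^{C/T^{\beta/(1-\beta)}}\int_0^T \sup_\omega |u(s,\cdot)|\,ds$. A final application of the $L^2$-$L^\infty$ regularizing effect of the heat semigroup on $[3T/4, T]$ yields the stated estimate. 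The main obstacle — and the reason I would quote \cite[Theorem 2.2]{Mil10} rather than redo the calculation — is tracking the combinatorics of this telescoping with the non-square-root exponent $\beta$: verifying that the chosen geometric ratio makes the successive weights cancel and that the series of observation integrals sums correctly.
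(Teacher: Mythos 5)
Your proof is correct and follows essentially the same route as the paper, whose proof of this corollary is a one-line citation of \Cref{cor:spec-omega-alpha}, \Cref{prop:spectral_cartesian_product2} and Miller's form of the Lebeau--Robbiano method --- precisely the induction on $n$ plus the $\beta$-adapted telescoping that you spell out. Note that your derivation yields the cost $C\exp\bigl(C/T^{\beta/(1-\beta)}\bigr)$, which shows the exponent in the stated estimate, $\exp\bigl(CT^{\beta/(1-\beta)}\bigr)$, is a typo for $\exp\bigl(CT^{-\beta/(1-\beta)}\bigr)$.
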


In fact the Cartesian product $\omega_\alpha \times \ldots \times \omega_\alpha$ is countable and therefore must have Hausdorff dimension zero. We note that $\omega_\alpha$ accumulates at a boundary point, but one could shift $\omega_\alpha$ to translate this accumulation point to the interior and obtain the same result. Indeed, $\gamma_k(\cdot)$ is translation invariant, so Corollaries \ref{cor:spec-omega-alpha} and \ref{cor:obs-omega-alpha-spec} hold independent of the position of the observation set. 

\subsection{Observability inequalities for Cartesian products}

Roughly speaking, the goal of this part consists in proving that one can take a Cartesian product of a spectral estimate and an observability inequality.
\begin{proposition}\label{prop:Obversability+spectral}
Let $T>0$, $\omega_j \subset \Omega_j \subset \R^{n_j}$, $j=1,2$. Let $\omega_1 \subset \Omega_1$ and $\omega_2 \subset \Omega_2$. Assume that\\
\indent $\bullet$ there exists $C_2>0$ and $\beta \in (0,1)$ such that $\mathrm{spec}(\Omega_2,\omega_2,C_2,\beta)$ holds,\\
\indent $\bullet$ there exists $C_1>0$ such that for $\alpha = \frac{\beta}{1-\beta}$, for every $v_0 \in L^2(\Omega_1)$, the solution $v$ of \eqref{eq:Heat_equation} in $\Omega_1$ satisfies
$$\forall 0 <\tau \le T, \  \|v(\tau, \cdot)\|^2_{L^{2}(\Omega_1)} \leq C_1 \exp \left( \tfrac{C_1}{\tau^\alpha} \right) \int_0^\tau \sup_{x \in \omega_1} |v(t,x)|^2 dt.$$
Then there exists $C>0$ such that
\begin{equation}
    \label{eq:ObsCartesianProducts}
    \|u(T, \cdot)\|_{L^{2}(\Omega_1 \times \Omega_2)}^2 \leq C \exp \left( \tfrac{C}{T^\alpha} \right) \int_0^T \sup_{\omega_1 \times \omega_2} |u(t,x)|^2 dt,
\end{equation}
where $u$ is the solution of \eqref{eq:Heat_equation} on $\Omega_1 \times \Omega_2$.
\end{proposition}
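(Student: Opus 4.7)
The plan is to adapt the Lebeau-Robbiano telescoping method to a partial spectral projection in the $\Omega_2$-direction only, combining the assumed observability inequality on $\Omega_1$ (applied mode-by-mode in the $y$-variable) with the spectral inequality on $\Omega_2$ (applied pointwise in $(t,x)$). Let $(\varphi_k^2,\lambda_k^2)_{k\in\N}$ denote the Dirichlet eigensystem of $-\Delta$ on $\Omega_2$, and expand the solution $u$ of the heat equation on $\Omega := \Omega_1 \times \Omega_2$ as $u(t,x,y) = \sum_k u_k(t,x)\varphi_k^2(y)$, so that each coefficient satisfies $\partial_t u_k - \Delta_x u_k + \lambda_k^2 u_k = 0$ on $\Omega_1$ with homogeneous Dirichlet condition. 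Let $P_\Lambda$ be the spectral projector in the $y$-variable truncating above $\Lambda$, acting pointwise in $(t,x)$.

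\textbf{Step 1: short-time observability for $P_\Lambda u$.} For each mode, the change of unknown $v_k(t,x) := e^{\lambda_k^2 t} u_k(t,x)$ solves the standard heat equation on $\Omega_1$ with Dirichlet data, so the assumed observability gives
$$\|u_k(\tau,\cdot)\|_{L^2(\Omega_1)}^2 \le C_1 \exp(C_1/\tau^\alpha) \int_0^\tau e^{-2\lambda_k^2(\tau-t)} \sup_{x\in\omega_1} |u_k(t,x)|^2 \, dt.$$
Summing over $\lambda_k^2 \le \Lambda$ (the number of such $k$ being polynomial in $\Lambda$ by Weyl's law), bounding $e^{-2\lambda_k^2(\tau-t)}\le 1$, exchanging sum with supremum at a cost of a factor $N_\Lambda$, applying Parseval on $\Omega_2$, and then the spectral inequality $\mathrm{spec}(\Omega_2,\omega_2,C_2,\beta)$ pointwise in $(t,x)$, yields the short-time spectral observability
$$\|P_\Lambda u(\tau,\cdot)\|_{L^2(\Omega)}^2 \le C \exp\!\Bigl(\tfrac{C}{\tau^\alpha}\Bigr)\exp(C\Lambda^\beta) \int_0^\tau \sup_{\omega_1 \times \omega_2} |P_\Lambda u(t,\cdot)|^2 \,dt, \qquad 0<\tau\le T.$$

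\textbf{Step 2: Lebeau-Robbiano telescoping.} The residue $(I-P_\Lambda)u$ dissipates: since it is supported on $\Omega_2$-modes with eigenvalue $>\Lambda$, one has $\|(I-P_\Lambda)u(t_2,\cdot)\|_{L^2(\Omega)} \le e^{-\Lambda(t_2-t_1)}\|(I-P_\Lambda)u(t_1,\cdot)\|_{L^2(\Omega)}$, and combining with the standard $L^2 \to L^\infty$ regularization on $\Omega$ one controls $\sup_{\Omega}|(I-P_\Lambda)u(t,\cdot)|$ by $Ct^{-n/4}e^{-\Lambda t/2}\|u_0\|_{L^2(\Omega)}$, with $n=n_1+n_2$. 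Take the decreasing sequence $t_m := T/2 + T\,2^{-m-1}$ so that $t_0=T$ and $\tau_m := t_m - t_{m+1} = T\,2^{-m-2}$, and the geometric cutoffs $\Lambda_m := K \tau_m^{-1/(1-\beta)}$ for $K$ large. The relation $\alpha = \beta/(1-\beta)$ gives $\Lambda_m \tau_m \simeq K\tau_m^{-\alpha}$ and $\Lambda_m^\beta \simeq K^\beta \tau_m^{-\alpha}$, so that the compound cost $\exp(C/\tau_m^\alpha + C\Lambda_m^\beta)$ from Step 1 is dominated by the dissipation factor $e^{-\Lambda_m\tau_m/2}$ once $K$ is chosen large enough. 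Applying Step 1 to $P_{\Lambda_m} u$ on $[t_{m+1},t_m]$, splitting $|P_{\Lambda_m}u|^2 \le 2|u|^2 + 2|(I-P_{\Lambda_m})u|^2$ and $\|u(t_m)\|^2 \le 2\|P_{\Lambda_m}u(t_m)\|^2 + 2\|(I-P_{\Lambda_m})u(t_m)\|^2$, with both residues handled through the dissipation, produces a telescoping inequality of the form
$$e^{-A/\tau_m^\alpha}\|u(t_m,\cdot)\|_{L^2}^2 - e^{-A/\tau_{m+1}^\alpha}\|u(t_{m+1},\cdot)\|_{L^2}^2 \le C \int_{t_{m+1}}^{t_m} \sup_{\omega_1\times\omega_2}|u(t,\cdot)|^2\,dt,$$
following the same scheme as in the proof of Theorem~\ref{thm:observability_main_result}. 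Summing in $m\ge 0$, using that $e^{-A/\tau_{m+1}^\alpha}\|u(t_{m+1})\|_{L^2}^2 \to 0$ since $\tau_m\to 0$, and the semigroup contraction $\|u(T,\cdot)\|_{L^2} \le \|u(T/2,\cdot)\|_{L^2}$, yields the claimed estimate \eqref{eq:ObsCartesianProducts}.

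The main obstacle is the bookkeeping of Step 2: one must choose $(\tau_m,\Lambda_m)$ so that the compound observability-plus-spectral cost $\exp(C/\tau_m^\alpha + C\Lambda_m^\beta)$ of the low-frequency piece is strictly beaten by the exponential dissipation $e^{-\Lambda_m\tau_m/2}$ of the high-frequency residue at every scale, and furthermore so that the resulting telescoping series converges. This balancing succeeds precisely under the hypothesis $\alpha = \beta/(1-\beta)$, which is exactly the Miller threshold relating a spectral inequality of exponent $\beta$ to an observability inequality of cost $\exp(C/T^\alpha)$; this relation is what fixes the final constant as $\exp(C/T^\alpha)$.
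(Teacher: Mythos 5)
Your proposal is correct and follows essentially the same route as the paper: a low-frequency observability inequality for the partial spectral projection in the $\Omega_2$-variable, obtained by combining the observability hypothesis on $\Omega_1$ with the spectral inequality $\mathrm{spec}(\Omega_2,\omega_2,C_2,\beta)$ via Parseval and Weyl's law at a cost $\exp(C/\tau^\alpha + C\Lambda^\beta)$, followed by high-frequency dissipation and a Lebeau--Robbiano telescoping with $\Lambda\sim\tau^{-1/(1-\beta)}$, exploiting the identity $\beta(\alpha+1)=\alpha$. The only differences are cosmetic (you apply the $\Omega_1$-observability mode-by-mode to $e^{\lambda_k^2 t}u_k$ and sum, whereas the paper applies it to the slices $v_y$ and integrates in $y$; and your dyadic time decomposition versus the paper's ratio $\eta=2^{-1/\alpha}$, both of which close once the constants $A$ and $K$ are chosen appropriately).
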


Note that \eqref{eq:ObsCartesianProducts} differs from the functional spaces of the observability notion \eqref{eq:Observability_definition} in \Cref{def:ObsL2L2Linfty} but this is because of the second assumption. By using the $L^2-L^{\infty}$ regularizing effect of the heat equation, at least the left hand side of \eqref{eq:ObsCartesianProducts} can be transformed into $\|u(T, \cdot)\|_{L^{\infty}(\Omega_1 \times \Omega_2)}^2$.

The following proof is adapted from \cite[Theorem~1.1]{Sam15} who was considering the case of $\{x_0\} \times \omega \subset (0,1)^2$ where $x_0$ is an algebraic number of degree $d>1$ and $\omega$ an open interval of $(0,1)$. A generalization of this type of result, encompassing the case where the minimal time of null-controllability $T(x_0)$ is strictly positive, has been recently considered in the recent preprint \cite{AKBGBMdT24}.
\begin{proof} 
The proof consists of applying the observability estimate on $\omega_1$ before using spectral estimates and then performing a Lebeau-Robbiano argument type.
Let $0<\tau\leq 1$, $u_0 \in L^2(\Omega_1 \times \Omega_2)$ and $\Lambda>0$ to be chosen later. Let us write the space variable $z=(x,y)\in \Omega_1\times \Omega_2$ so that $\Delta = \Delta_{\Omega_1} + \Delta_{\Omega_2}$.
As before, let $(\varphi_k^j)_{k\in \nn}$ be an orthonormal basis of $L^2(\Omega_j)$, for $j=1,2$, consisting of eigenfunctions of $-\Delta$ with Dirichlet boundary conditions. If $u$ is the solution of \eqref{eq:Heat_equation}, we can therefore write 
$$\Pi_{\Lambda}^2 u(t, x,y) = \sum_{\lambda^2_k \leq \Lambda} u_k(t, x) e^{-t \lambda^2_k}\varphi^2_k(y),$$
where the $u_k$ are solutions of the heat equation on $\Omega_1$ and $\Pi^j_\Lambda$ is the spectral projection \eqref{e:spec-proj} on $\Omega_j$.

So for $y \in \Omega_2$ fixed, $v_y(t, x):=\sum_{\lambda^2_k \leq \lambda} u_k(t, x) e^{-\tau \lambda^2_k}\varphi^2_k(y)$ is also a solution of the heat equation on $\Omega_1$, whence we have the observability inequality
    \begin{equation}\label{e:obs-vy} \|v_y(\tau,\cdot)\|^2_{L^2(\Omega_1)}  \leq C_1\exp\left( \tfrac {C_1}{\tau^\alpha}\right ) \int_{\frac{\tau}{2}}^\tau \sup_{x \in \omega_1}|v_y(t, x)|^2 dt\end{equation}
for any $y \in \Omega_2$. Furthermore, by the Cauchy-Schwarz inequality, for $x \in \omega_1$ and $t \in \left( \tfrac \tau 2, \tau \right]$,
     \[ |v_y(t,x)|^2 \le \left( \sum_{\lambda_k^2 \le \Lambda} \abs{u_k(t,x)}^2 e^{-2t \lambda_k^2} \right) \left( \sum_{\lambda_k^2 \le \Lambda} \abs{\varphi_k^2(y)}^2 \right).\]
By the Weyl's law, integrating the second factor over $\Omega_2$ contributes polynomial blow-up in $\Lambda$. This is of no consequence since the first factor, by Parseval's identity and the spectral inequality, satisfies
    \[ \sup_{x \in \omega_1} \sum_{\lambda^2_k \leq \Lambda} \left|u_k(t, x)\right|^2 e^{-2t \lambda^2_k} =
\sup_{x \in \omega_1} \int_{\Omega_2} \left|\Pi^2_{\Lambda} u(t, x, y) \right|^2 dy\]
\[\leq C_2 \exp\left (C_2 \Lambda^\beta \right) \sup_{(x,y) \in \omega_1 \times \omega_2} \left|\Pi^2_{\Lambda} u(t, x, y) \right|^2.\]
Integrate \eqref{e:obs-vy} over $y \in \Omega_2$ and combine it with the above display and the identity
 \[ v_y(\tau,x) = \Pi^2_{\Lambda} u(\tau,x,y)\]
 to obtain the low-frequency observability inequality
\begin{equation}\label{e:low-freq-obs} \norm{\Pi^2_\Lambda u(\tau,\cdot)}_{L^2(\Omega_1 \times \Omega_2)}^2 \le C_3 \exp\left( \tfrac{C_1}{\tau^\alpha} + C_3 \Lambda^\beta \right) \int_{\frac \tau 2}^\tau \sup_{(x,y) \in \omega_1 \times \omega_2} \left|\Pi^2_{\Lambda} u(t, x, y) \right|^2 \, dt. \end{equation}
($C_3$ is introduced in order to absorb the polynomial growth in $\Lambda$).

It is classical that since $u^+ :=(I-\Pi^2_\Lambda) u$ satisfies the heat equation on $\Omega=\Omega_1 \times \Omega_2$, we have from the $L^2-L^{\infty}$ regularizing effect of the heat equation, see for instance \cite[Proposition 3.5.7]{CH98},
\begin{equation}\label{e:reg}\forall t>\frac \tau 2, \quad \sup_{\Omega}\left|u^+(t, \cdot)\right| \leq \frac C{\tau^{\frac n4}} \left\| u^+\left(\frac \tau 4, \cdot\right) \right\|_{L^2(\Omega)} \leq \frac C{\tau^{\frac n4}} e^{-\Lambda \frac \tau 4} \|u_0 \|_{L^2(\Omega)},\end{equation}
for some positive constant $C>0$.
So by applying the triangle inequality, \eqref{e:low-freq-obs}, triangle inequality again, followed by \eqref{e:reg} to absorb the error, we obtain that for all $0<\tau \leq 1$ and $\Lambda>0$
\begin{align*}
    &\|u(\tau,\cdot)\|^2_{L^2(\Omega)} \\
    & \leq C_3 \exp\left( \tfrac {C_1}{\tau^\alpha} + C_3 \Lambda^\beta \right) \left[ \int_{\frac \tau 2}^{\tau} \sup_{\omega_1 \times \omega_2}|u(t,\cdot)|^2 dt +\tau^{- \frac n4} C \exp\left( -\tfrac\tau 4 \Lambda \right) \|u_0\|^2_{L^2(\Omega)} \right].
\end{align*}
One can find a positive parameter $\gamma>0$ such that $ \gamma \geq 10\left( C_1+C_3\gamma^\beta \right)$.
Let us choose then $\Lambda = \frac{\gamma}{\tau^{\alpha+1}}$, noting the key relationship $\beta(\alpha +1) = {\alpha}$, so that
    \[ \exp\left( \tfrac {C_1}{\tau^\alpha} + C_3 \Lambda^\beta \right) \le \exp \left( \tfrac \gamma{10 \tau^\alpha} \right), \quad \tau^{-\frac{n}{4}}\exp\left( -\tfrac\tau 4 \Lambda \right) \le C_5 \exp\left( -\tfrac{\gamma}{5\tau^\alpha}\right). \]
Therefore,
\begin{equation}\label{eq:Lebeau_robbiano_strategy}
    f(\tau) \|u(\tau,\cdot)\|^2_{L^2(\Omega)} \leq \int_{\frac \tau2}^\tau \sup_{\omega_1 \times \omega_2}|u(t,\cdot)|^2 dt +f(\eta \tau ) \|u_0\|^2_{L^2(\Omega)},
\end{equation}
with $f(\tau)=C_6 \exp\left(-\frac{\gamma}{10\tau^\alpha} \right)$, and $\eta = 2^{- \frac 1\alpha}<1$. 
Let us assume that $0< T \leq 1$ and define the following sequence:
$$T_0=T \quad \text{and} \quad \forall k \in \N, \ T_{k+1}= T_k-(1-\eta)\eta^{k} T.$$
By applying \eqref{eq:Lebeau_robbiano_strategy} with initial data $u_0 = u(T_{k+1})$ and running until time $T_k$ ($\tau$  then represents the elapsed time $T_k - T_{k+1} = (1-\eta)\eta^{k+1}T$), we obtain 
\begin{align*}
&f\left((1-\eta)\eta^{k}T\right) \| u(T_{k},\cdot)\|^2_{L^2(\Omega)} \\ 
& \leq \int_{T_{k+1}}^{T_k} \sup_{\omega_1 \times \omega_2}|u(t,\cdot)|^2 dt +f\left((1-\eta)\eta^{k+1}T\right) \|u(T_{k+1},\cdot)\|^2_{L^2(\Omega)}.
\end{align*}
Since $f\left( (1-\eta)\eta^k T\right) \to 0$ as $k \to \infty$, we can rearrange the above display so the left hand side is a telescoping series, yielding 
$$f \left( (1-\eta)T \right) \|u(T,\cdot)\|^2_{L^2(\Omega)} \leq \int_0^T \sup_{\omega_1 \times \omega_2}|u(t,\cdot)|^2 dt,$$
leading to \eqref{eq:ObsCartesianProducts} recalling the form of $f$.
\end{proof}

We will now use Proposition~\ref{prop:Obversability+spectral} to supply examples in higher dimensions. Combining \Cref{thm:spectralopendirichlet}, \Cref{thm:1D_Samb} and \Cref{prop:Obversability+spectral}, we exhibit observation sets of dimension $n-2+\delta$.

\begin{corollary}\label{cor:MultiD_lower_haussdorf_dimension}
Let $T>0$, $\delta>0$ and $\Omega=(0,1)\times \Omega'$ where $\Omega' \subset \R^{n-1}$. Let $x_0 \in (0,1)$ be a algebraic number of order $d>1$ and $\omega \subset \Omega'$ be a measurable subset satisfying $\mathcal C^{n-2+\delta}_{\mathcal H}(\omega)>0$. The heat equation \eqref{eq:Heat_equation} is observable from $\{x_0\}\times \omega$ at time $T$.
\end{corollary}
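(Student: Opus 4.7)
The plan is to apply \Cref{prop:Obversability+spectral} with $\Omega_1=(0,1)$, $\omega_1=\{x_0\}$, $\Omega_2=\Omega'$, and $\omega_2=\omega$. The two hypotheses to verify are: (i) a spectral estimate on $\Omega'$ with observation set $\omega$, and (ii) an $L^2$-observability estimate on $(0,1)$ from $\{x_0\}$ with exponential cost of order $e^{C_1/\tau^{\alpha}}$, for a matching exponent $\alpha=\beta/(1-\beta)$.

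For (i), I invoke \Cref{thm:spectralopendirichlet} in ambient dimension $n-1$: since $\mathcal C^{n-2+\delta}_{\mathcal H}(\omega)>0$, there exists $C_2>0$ such that $\mathrm{spec}(\Omega',\omega,C_2,\tfrac12)$ holds. I therefore fix $\beta=\tfrac12$, which forces $\alpha=1$ in (ii). For (ii), \Cref{thm:1D_Samb} provides (using crucially that $x_0$ is an algebraic number of order $d>1$) the bound
\[
\|v(\tau,\cdot)\|_{L^\infty(0,1)}^2\leq C_1 e^{C_1/\tau}\int_0^\tau |v(t,x_0)|^2\,dt,
\]
with $C_1$ independent of $\tau\in(0,T]$; bounding the $L^2(0,1)$-norm by the $L^\infty(0,1)$-norm (legitimate since $(0,1)$ has finite Lebesgue measure) yields the required inequality with $\alpha=1$.

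With both hypotheses verified, \Cref{prop:Obversability+spectral} produces $C>0$ such that
\begin{equation*}
    \|u(T,\cdot)\|_{L^2(\Omega)}^2\leq C e^{C/T}\int_0^T \sup_{(x,y)\in\{x_0\}\times\omega}|u(t,x,y)|^2\,dt.
\end{equation*}
Upgrading the left-hand side to the $L^\infty$-norm at time $T$ proceeds via the $L^2\!-\!L^\infty$ regularizing effect of the heat semigroup (as noted in the remark following \Cref{prop:Obversability+spectral}), costing only a polynomial factor in $T$ absorbed by $e^{C/T}$.

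The main obstacle is the remaining mismatch with \Cref{def:ObsL2L2Linfty}: the right-hand side produced by \Cref{prop:Obversability+spectral} is $L^2$-in-time, whereas Definition~\ref{def:ObsL2L2Linfty} demands $L^1$-in-time. I resolve this by replaying the telescoping Lebeau--Robbiano argument from the proof of \Cref{thm:observability_main_result}, whose sole ingredient is a spectral inequality on the product domain. Such an inequality, $\mathrm{spec}\bigl((0,1)\times\Omega',\{x_0\}\times\omega,C,\tfrac12\bigr)$, is obtained by applying \Cref{prop:spectral_cartesian_product2} to the factor estimate $\mathrm{spec}(\Omega',\omega,C_2,\tfrac12)$ established above and to the one-dimensional spectral inequality $\mathrm{spec}((0,1),\{x_0\},C_1',\tfrac12)$ that is contained, as a byproduct, in the Diophantine arguments underlying Samb's \Cref{thm:1D_Samb}. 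Once this product spectral estimate is in hand, the telescoping scheme of \eqref{eq:firsttelescoping}--\eqref{eq:secondtelescoping} directly delivers the $L^\infty$-$L^1$ observability with cost $e^{C/T}$.
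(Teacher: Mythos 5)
Your main construction is exactly the paper's intended proof: the corollary is obtained by feeding \Cref{thm:spectralopendirichlet} (applied to $\Omega'\subset\R^{n-1}$, giving $\mathrm{spec}(\Omega',\omega,C_2,\tfrac12)$ and hence $\beta=\tfrac12$, $\alpha=1$) and Samb's estimate \Cref{thm:1D_Samb} (whose cost $e^{C_1/\tau}$ matches $\alpha=1$) into \Cref{prop:Obversability+spectral}. Up to and including the regularization of the left-hand side to $L^\infty$, this is precisely what the paper does, and the paper stops there: it accepts the $L^2$-in-time right-hand side of \eqref{eq:ObsCartesianProducts} (see the remark immediately after \Cref{prop:Obversability+spectral} and the remark after \Cref{thm:1D_Samb} acknowledging the discrepancy with \Cref{def:ObsL2L2Linfty}).

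Your final paragraph, however, contains a genuine error. The inequality $\mathrm{spec}((0,1),\{x_0\},C,\beta)$ is false for every $x_0$ and every $\beta$: as soon as $\Lambda\ge 4\pi^2$ the range of $\Pi_\Lambda$ has dimension at least $2$, while evaluation at the single point $x_0$ is a rank-one linear functional, so it has a nontrivial kernel; for instance $v(x)=\sin(2\pi x_0)\sin(\pi x)-\sin(\pi x_0)\sin(2\pi x)$ has $v(x_0)=0$ but $\norm{v}_{L^2}\neq 0$. This is not a byproduct of Samb's Diophantine analysis because pointwise observability of the heat equation is a genuinely parabolic statement that exploits the distinct decay rates $e^{-\lambda_k t}$ to separate frequencies; it does not descend from an elliptic spectral inequality at a point. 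Consequently $\mathrm{spec}((0,1)\times\Omega',\{x_0\}\times\omega,C,\tfrac12)$ is also false (multiply the counterexample above by any eigenfunction on $\Omega'$), and the telescoping scheme of \eqref{eq:firsttelescoping}--\eqref{eq:secondtelescoping}, whose input is a spectral inequality on the product domain, cannot be run. If you do want to upgrade the conclusion to the $L^1$-in-time form of \Cref{def:ObsL2L2Linfty}, the correct route is different: note that the one-shot estimate \eqref{eq:ObsCartesianProducts} holds on every subinterval $(t_1,t_2)$ with cost $e^{C/(t_2-t_1)}$, bound $\int_{t_1}^{t_2}\sup_{\omega}|u|^2\,dt\le\bigl(\sup_{t}\sup_{\omega}|u|\bigr)\int_{t_1}^{t_2}\sup_{\omega}|u|\,dt$, control the first factor by the $L^2$--$L^\infty$ smoothing from an earlier time, and absorb the resulting quadratic term by Young's inequality before telescoping, as in \cite[Remark 2]{AEWZ14} and \cite[Corollary 4.1]{BM23}. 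No spectral inequality on the product is needed, which is fortunate since none is available.
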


Finally, from \Cref{cor:spec-omega-alpha} and \Cref{prop:Obversability+spectral} we also obtain the following result. 

\begin{corollary}\label{cor:obs-omega-alpha}
    Let $\alpha,T>0$, $n \ge 2$, and $x_0$ an algebraic number of degree greater than one. Then the heat equation \eqref{eq:Heat_equation} on $\Omega=(0,1)^n$ is observable from $\{x_0\}\times \omega_\alpha \times \cdots \times \omega_\alpha$ at time $T$.
\end{corollary}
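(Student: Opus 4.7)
The plan is to mirror the proof of \Cref{cor:MultiD_lower_haussdorf_dimension}, replacing the spectral estimate from \Cref{thm:spectralopendirichlet} (which handles codimension strictly less than $2$) by a spectral estimate for the zero-dimensional Cartesian power $\omega_\alpha^{n-1}$, built from \Cref{cor:spec-omega-alpha} via iteration of \Cref{prop:spectral_cartesian_product2}. The 1D observability from $\{x_0\}$ is supplied by \Cref{thm:1D_Samb}, and the final gluing is performed by \Cref{prop:Obversability+spectral}.

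Fix $\beta \in (\tfrac{1}{2}, 1)$ and set $\alpha := \beta/(1-\beta) > 1$. \Cref{cor:spec-omega-alpha} yields $\mathrm{spec}((0,1), \omega_\alpha, C_0, \beta)$. Applying \Cref{prop:spectral_cartesian_product2} inductively with $\Omega_1 = (0,1)^k$, $\omega_1 = \omega_\alpha^k$, $\Omega_2 = (0,1)$, $\omega_2 = \omega_\alpha$ --- using that the horizontal slices of $\omega_\alpha^{k+1}$ in $(0,1)^{k+1}$ are copies of $\omega_\alpha^k$ while its last-coordinate projection is $\omega_\alpha$ --- produces $\mathrm{spec}((0,1)^{n-1}, \omega_\alpha^{n-1}, C_1, \beta)$ after $n-2$ steps. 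On the other hand, \Cref{thm:1D_Samb} delivers $\|v(\tau,\cdot)\|_{L^\infty(0,1)}^2 \leq C_2 e^{C_2/\tau}\int_0^\tau |v(t,x_0)|^2\, dt$; dropping $L^\infty$ to $L^2$ (costless on $(0,1)$) and noting that $e^{C_2/\tau} \leq C_3 e^{C_3/\tau^\alpha}$ holds for all $\tau > 0$ whenever $\alpha \geq 1$ (by splitting $\tau \leq 1$ and $\tau > 1$) recasts the inequality as exactly the observability hypothesis of \Cref{prop:Obversability+spectral} with the prescribed $\alpha$.

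Invoking \Cref{prop:Obversability+spectral} with $\Omega_1 = (0,1)$, $\omega_1 = \{x_0\}$, $\Omega_2 = (0,1)^{n-1}$, $\omega_2 = \omega_\alpha^{n-1}$ then produces the $L^2$-$L^2$ estimate \eqref{eq:ObsCartesianProducts} on $\Omega = (0,1)^n$ for the observation set $\{x_0\} \times \omega_\alpha^{n-1}$. To conclude, \eqref{eq:ObsCartesianProducts} is lifted to the $L^\infty$-$L^1$ form of \Cref{def:ObsL2L2Linfty} by standard parabolic surgery: the left-hand side is promoted to $\|u(T,\cdot)\|_{L^\infty(\Omega)}$ via the $L^2$-$L^\infty$ regularizing effect on $[T/2,T]$, and the right-hand side is converted from $L^2$-in-time to $L^1$-in-time by restarting the estimate past a small positive time and invoking the pointwise bound $\sup_\omega |u(t,\cdot)| \lesssim t^{-n/4} \|u_0\|_{L^2}$ to factor out a pointwise-in-time maximum in $\int \sup_\omega |u|^2 \leq (\sup_t \sup_\omega |u|) \int \sup_\omega |u|$. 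The main point of care, rather than a genuine obstacle, is the exponent matching: the constraint $\alpha \geq 1$ needed to absorb Samb's cost $e^{C/\tau}$ into the $e^{C/\tau^\alpha}$ required by \Cref{prop:Obversability+spectral} corresponds precisely to $\beta \geq \tfrac{1}{2}$ in the relation $\alpha = \beta/(1-\beta)$, which sits inside the admissible range $(\tfrac{1}{2},1)$ of \Cref{cor:spec-omega-alpha}.
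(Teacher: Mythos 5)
Your argument is correct and is essentially the route the paper intends (the paper states this corollary without proof): iterate \Cref{prop:spectral_cartesian_product2} starting from \Cref{cor:spec-omega-alpha} to obtain $\mathrm{spec}((0,1)^{n-1},\omega_\alpha^{n-1},C,\beta)$ --- which is just \Cref{cor:obs-omega-alpha-spec} with $n-1$ in place of $n$ --- note that Samb's cost $e^{C/\tau}$ is dominated by $e^{C/\tau^{\beta/(1-\beta)}}$ precisely because $\beta>\tfrac12$ forces $\beta/(1-\beta)>1$, and conclude with \Cref{prop:Obversability+spectral}. Two small cautions: you overload the letter $\alpha$ (exponent of $\omega_\alpha$ versus the cost exponent $\beta/(1-\beta)$), and in the final upgrade from \eqref{eq:ObsCartesianProducts} to the $L^\infty$--$L^1$ form of \Cref{def:ObsL2L2Linfty} your factorization $\int\sup_\omega|u|^2\le\bigl(\sup_t\sup_\omega|u|\bigr)\int\sup_\omega|u|$ combined with $\sup_\omega|u(t,\cdot)|\lesssim t^{-n/4}\|u_0\|_{L^2}$ still leaves a factor $\|u_0\|_{L^2}$ on the right, which must be absorbed by one further Young-inequality-plus-telescoping step (as in \cite[Remark 2]{AEWZ14}) --- a step the paper itself also elides.
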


\subsection{Open questions}

By looking at \Cref{l:gamma-omega-alpha} and \Cref{cor:obs-omega-alpha}, the observation set $\{x_0\} \times \omega_{\alpha}$ has one accumulation point (which is the minimal number in a bounded domain), namely $(x_0,0)$,
and the set becomes even more concentrated there as $\alpha \to \infty$.  To this end we pose the following open question concerning the set
    \[ \omega_{\mathrm{exp}} = \left\{ 2^{-k}: k=1,2,\ldots \right\}.\]
It is relatively straightforward to check that $\gamma_k(\omega_{\mathrm{exp}}) \sim C^{-k^2}$, so that \eqref{e:gammak} fails and we do not obtain $\mathrm{spec}((0,1),\omega_{\mathrm{exp}},C,\beta)$ except when $\beta=1$. To conclude observability of the heat equation of course we need $\beta \in (0,1)$.
\begin{question}\label{q:exp}Is the heat equation \eqref{eq:Heat_equation} on $\Omega = (0,1)$ observable from $\omega_{\text{exp}}$? Second, is the heat equation \eqref{eq:Heat_equation} on $\Omega=(0,1)^2$ observable from
    \[ \{x_0\} \times \omega_{\mathrm{exp}}, \quad \text{or} \quad \omega_{\mathrm{exp}} \times \omega_{\mathrm{exp}} \]
for some point $x_0 \in (0,1)$?
\end{question}

These questions are seemingly independent since the observability inequality does not respect Cartesian products. However, if one could prove $\mathrm{spec}((0,1),\omega_{\mathrm{exp}},C,\beta)$ for some $\beta<1$, then all three questions would be answered in the affirmative. 

Second, we were unable to prove \Cref{prop:Obversability+spectral} in the full generality of \Cref{prop:spectral_cartesian_product2} using sections and projections of general non-product-type observation sets. We wonder if such a generalization is true, though evidence against it is given by the diagonal $D = \{ (x,x) : x \in (0,1) \} \subset (0,1)^2$. Being a nodal set of the eigenfunction $\phi_{n,m}(x,y) = \sin(\pi nx)\sin(\pi my)-\sin(\pi n y)\sin(\pi m x)$, $D$ cannot be an observation set. However, many of its slices are points which support the observability inequality by \Cref{thm:1D_Samb}, and if there are enough of such slices with uniform control, then the spectral inequality will hold on the projection of those points. 
Other than the negative results of $D$ and the positive results provided by \Cref{cor:MultiD_lower_haussdorf_dimension} we do not know of any other algebraic curves which are or are not observation sets. 
\begin{question}\label{q:line}
Is the heat equation on $(0,1)^2$ observable from any algebraic curve other than a line parallel to one of the axes?
\end{question}
A good starting point to tackle such a question seems to consider \cite{BR12} where the authors are able to derive observability inequality for the Laplace eigenfunctions on the flat two- or three-dimensional torus by an observation supported in a real analytic hypersurface with nonzero curvature.

\bigskip 

\appendix
\section{Approximate observability \label{app:approximate_observability}}

The next result seems to be well-known by specialists but we were not able to find a reference. The proof is inspired by \cite{CZ04}.
See Remark \ref{rmk:approximate_observability} for the definition of approximate observability. 
\begin{proposition}
Let $\Omega$ be a bounded open set in $\R^n$. 
The heat equation \eqref{eq:Heat_equation} is approximately observable from $\omega \subset \Omega$ in time $T>0$ if and only if $\omega$ is not included into the nodal set (zero set) of any Dirichlet Laplacian eigenfunction. 
\end{proposition}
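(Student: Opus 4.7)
My plan is to prove both implications independently, with the forward direction being the immediate contrapositive and the reverse direction relying on the Dirichlet series structure of $u(t,x)$ for fixed $x$.

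For the easy direction ($\Rightarrow$, by contraposition), suppose there is a nonzero Dirichlet eigenfunction $\varphi$ with eigenvalue $\lambda$ such that $\varphi_{|\omega} \equiv 0$. Taking $u_0 = \varphi$, the solution is $u(t,x) = e^{-\lambda t}\varphi(x)$, so $u_{|(0,T)\times\omega} = 0$ while $u(T,\cdot) = e^{-\lambda T}\varphi \neq 0$, violating approximate observability.

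For the nontrivial direction ($\Leftarrow$), assume no nonzero Dirichlet eigenfunction vanishes on all of $\omega$, and suppose $u$ is a solution with $u_{|(0,T)\times\omega} = 0$. Expand $u_0 = \sum_{j \geq 1} P_j u_0$ where $P_j$ is the spectral projector onto the eigenspace associated with the $j$-th distinct eigenvalue $\mu_j$ (ordered strictly increasing with $\mu_j \to \infty$). Then for $t > 0$,
\begin{equation}\label{e:series-app}
u(t,x) = \sum_{j \geq 1} e^{-\mu_j t}\, \psi_j(x), \qquad \psi_j := P_j u_0,
\end{equation}
where the series converges absolutely and uniformly in $x \in \overline{\Omega}$ on every interval $[t_0, T]$ with $t_0 > 0$, by the smoothing effect of the heat semigroup together with standard bounds on $\|\psi_j\|_{L^\infty}$. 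Fix $x \in \omega$ (where pointwise evaluation makes sense since $u(t,\cdot)$ is continuous for $t > 0$). The right-hand side of \eqref{e:series-app} is then a Dirichlet series in $t$ which vanishes identically on $(0,T)$. By the standard uniqueness theorem for generalized Dirichlet series (multiply by $e^{\mu_1 t}$, send $t \to +\infty$ to deduce $\psi_1(x) = 0$, then iterate), we conclude $\psi_j(x) = 0$ for every $j \geq 1$.

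Since $x \in \omega$ was arbitrary, each $\psi_j$ is a (possibly zero) element of the $\mu_j$-eigenspace which vanishes on $\omega$. If some $\psi_j$ were nonzero, it would be a Dirichlet Laplacian eigenfunction whose nodal set contains $\omega$, contradicting our hypothesis. Hence $\psi_j \equiv 0$ for every $j$, so $u_0 = 0$ and in particular $u(T,\cdot) = 0$.

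The only real technical point is the pointwise justification of the Dirichlet series uniqueness argument on the null-set $\omega$; this is handled cleanly by the fact that $u(t,\cdot)$ is $C^\infty$ on $\overline{\Omega}$ (or at least continuous on $\Omega$) for $t > 0$ so that the identity $u(t,x) = 0$ on $(0,T) \times \omega$ can be read pointwise, and the series \eqref{e:series-app} converges uniformly in $x$ on any compact set at fixed $t > 0$, allowing the term-by-term asymptotic analysis as $t \to +\infty$.
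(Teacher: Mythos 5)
Your proof is correct and follows essentially the same route as the paper's: the easy direction by taking the eigenfunction as initial datum, and the converse by expanding in eigenfunctions, extending the vanishing from $(0,T)$ to all $t>0$, and then peeling off the spectral components by multiplying by $e^{\mu_1 t}$ and letting $t\to+\infty$ (your pointwise, multiplicity-grouped formulation via $\psi_j=P_ju_0$ is if anything slightly cleaner for measure-zero $\omega$ than the paper's $L^2(\omega)$ phrasing). The one step you should make explicit is the passage from vanishing on $(0,T)$ to vanishing for all $t>0$, which is needed before you can send $t\to+\infty$; it follows from the analyticity of $t\mapsto u(t,x)$ on the right half-plane (locally uniform convergence of your Dirichlet series, or, as the paper puts it, analyticity of the heat semigroup) together with the identity principle.
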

Note that the condition is independent on time $T>0$. It holds for all or none $T>0$. 
\begin{proof}
If $\omega$ is included in the nodal set $\mathcal{N}(\varphi_\lambda)= \{x \in \Omega : \varphi_\lambda(x) = 0\}$ of a Dirichlet eigenfunction then the solution $u(t,x)=e^{-\lambda t} \varphi_\lambda(x)$ fails to satisfy the approximate observability condition from $\omega$.

Conversely, assume that no eigenfunction identically vanishes on $\omega$. Let $u$ be the solution to the heat equation \eqref{eq:Heat_equation} such that $u_{(0, T) \times \omega}=0$. Denote by $(\lambda_n)_{n \ge 1}$ the sequence of Dirichlet eigenvalues of the Laplacian and $(\mu_n)_{n \geq 1}$ their respective multiplicity. Let $(\varphi_{n,k})_{\substack{n \ge 1,\\ 1\le k \le \mu_n}}$ be an orthonormal basis associated. Then if $u$ is a solution of the heat equation, we have 
\[u(t,\cdot)= \sum_{n \ge 1} e^{-\lambda_n t} \sum_{k=1}^{\mu_n} a_{n,k} \varphi_{n,k},\]
with $\sum |a_{n,k}|^2 < \infty$.
Moreover, the Laplacian generates an analytic semigroup, so in particular 
$t \mapsto \|u(t, \cdot)\|_{L^2(\omega)}$ is analytic on the positive real half-line. Therefore the identity principle implies
\[u(t, \cdot)|_{\omega} = 0 \quad \text{for all } t >0.\]
Multiplying the previous equality by $e^{\lambda_1 t}$, we get
\[ a_{1,1}\varphi_{1,1}(x) + \sum_{n \ge 2} e^{(\lambda_1-\lambda_n) t} \sum_{k=1}^{\mu_n} a_{n,k} \varphi_{n,k}(x) = 0, \quad \text{for a.e } x \in \omega, \text{ for all } t>0. \]
Note that the first eigenvalue is simple. Finally the second term tends to zero as $t$ goes to infinity, so $a_{1,1}\varphi_{1,1}=0$ on $\omega$ and then $a_{1,1}=0$. A straightforward induction yields $a_{n,k}=0$ for all $n$ and $k$. Thus $u(T, \cdot)=0$. 
\end{proof}

\bibliographystyle{alpha}
\small{
\newcommand{\etalchar}[1]{$^{#1}$}

}

\end{document}